\newtheorem{theorem}{Theorem}[section]
\newtheorem{lemma}[theorem]{Lemma}
\newtheorem{proposition}[theorem]{Proposition}
\numberwithin{equation}{section}
\theoremstyle{definition}
\newtheorem{remark}[theorem]{Remark}
\def\leq{\leqslant }
\def\geq{\geqslant}
\begin{document}

\title[On Freiman's isolated points in $M\setminus L$]{Markov spectrum near Freiman's isolated points in $M\setminus L$}

\author[C. Matheus and C. G. Moreira]{Carlos Matheus and Carlos Gustavo Moreira}

\address{Carlos Matheus:
Universit\'e Paris 13, Sorbonne Paris Cit\'e, CNRS (UMR 7539),
F-93430, Villetaneuse, France.
}

\email{matheus.cmss@gmail.com}

\address{Carlos Gustavo Moreira:
IMPA, Estrada Dona Castorina 110, 22460-320, Rio de Janeiro, Brazil
}

\email{gugu@impa.br}

\date{\today}

\begin{abstract}
Freiman proved in 1968 that the Lagrange and Markov spectra do not coincide by exhibiting a countable infinite collection $\mathcal{F}$ of isolated points of the Markov spectrum which do not belong the Lagrange spectrum. 

In this paper, we describe the structure of the elements of the Markov spectrum in the largest interval $(c_{\infty}, C_{\infty})$ containing $\mathcal{F}$ and avoiding the Lagrange spectrum. In particular, we compute the smallest known element $f$ of $M\setminus L$, and we show that the Hausdorff dimension of the portion of the Markov spectrum between $c_{\infty}$ and $C_{\infty}$ is $> 0.2628$. 
\end{abstract}
\maketitle

\tableofcontents

\section{Introduction}

\subsection{Statement of the main results} The study of Diophantine approximation problems naturally led Markov to investigate (in 1880) two closed subsets $L\subset M$ of the real line called the Lagrange and Markov spectra. After that, a vast literature dedicated to these spectra was developed, and the reader is encouraged to consult the book \cite{CF} of Cusick--Flahive for an excellent introduction to this fascinating topic. 

Freiman \cite{Fr68} showed in 1968 that $M\setminus L\neq\emptyset$ by exhibiting a number $\sigma\simeq 3.1181\dots\in M\setminus L$. In the same article, Freiman also explained how to modify the construction of $\sigma$ in order to obtain a countable infinite collection $\mathcal{F}$ of isolated points of $M$ which are not in $L$. 

In this paper, we exploit the techniques in our previous article \cite{MaMo} to describe the structure of the intersection of $M\setminus L$ with the largest interval $(c_{\infty}, C_{\infty})$ containing $\mathcal{F}\ni \sigma$ which is disjoint from $L$. As a consequence, we show that:

\begin{theorem}\label{t.A} The Hausdorff dimension of $(M\setminus L)\cap (c_{\infty}, C_{\infty})$  satisfies:
$$0.2628 < HD((M\setminus L)\cap (c_{\infty}, C_{\infty}))$$
\end{theorem}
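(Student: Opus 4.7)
The plan is to exhibit an explicit Cantor-like subset $K\subset (M\setminus L)\cap (c_\infty,C_\infty)$ whose Hausdorff dimension is bounded below by $0.2628$. The construction is combinatorial: following the symbolic framework of \cite{MaMo}, I will realize elements of $M$ as values $m(\theta)=\sup_{n\in\Zset}\lambda_n(\theta)$ of bi-infinite sequences $\theta=(a_n)_{n\in\Zset}\in\{1,2,\dots\}^{\Zset}$, where $\lambda_n(\theta)=[a_n;a_{n+1},\dots]+[0;a_{n-1},a_{n-2},\dots]$, and similarly for the Lagrange spectrum with $\limsup$ in place of $\sup$.

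First, I would identify an explicit finite ``Freiman-type'' central word $w_\ast$ such that any bi-infinite sequence $\theta$ whose orbit under the shift contains exactly one occurrence of $w_\ast$ at position $0$ satisfies $\lambda_0(\theta)\in (c_\infty,C_\infty)$, and such that the $\lambda_0$-value depends on the asymptotic tails $(a_n)_{n\geq 1}$ and $(a_{-n})_{n\geq 1}$ only through a small ``window'' of digits. The isolated point $\sigma$ and Freiman's family $\mathcal F$ serve as the template: the central word is essentially the one producing $\sigma$, and the admissible tails are the ones that Freiman used to forbid the sup from being attained at any other position. Next, I would define an alphabet $\Sigma$ of finite admissible tail-blocks $b_1,\dots,b_N$ such that any bi-infinite concatenation of $b_i$'s glued to $w_\ast$ on both sides produces a sequence $\theta$ with (i) $\lambda_n(\theta)<\lambda_0(\theta)$ for all $n\neq 0$, forcing $m(\theta)=\lambda_0(\theta)$ and precluding $m(\theta)\in L$ since $w_\ast$ occurs only once, and (ii) $\lambda_0(\theta)\in (c_\infty,C_\infty)$. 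The resulting set of Markov values forms a Cantor set $K$ parametrised by sequences in $\Sigma^{\Nset}\times\Sigma^{\Nset}$, or a suitable subshift thereof.

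To estimate $HD(K)$ from below, I would use the standard fact that the continued fraction map is a conformal expanding system on the set of tails, so the Hausdorff dimension of the Cantor set of tails generated by the alphabet $\Sigma$ equals the unique $s$ solving Bowen's equation $P(-s\log|T'|)=0$, where $T$ is the Gauss map and $P$ is the topological pressure on the corresponding subshift of finite type. In practice I would replace this by a direct cover/separation argument: the block $b_i$ has length $\ell_i$ and corresponds to a cylinder of diameter comparable to $\prod q_j^{-2}$ where $q_j$ are the continued-fraction denominators, and by choosing $\Sigma$ rich enough (e.g.\ a carefully selected family of words in the letters $1,2$ whose resulting $\lambda_0$-values remain in the target interval) one obtains the quantitative lower bound. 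The map $\theta\mapsto\lambda_0(\theta)$ is bi-Lipschitz on each cylinder, so the dimension transfers from the symbolic Cantor set to $K\subset M\setminus L$.

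The main obstacle is the simultaneous control of the three constraints: keeping $m(\theta)$ strictly below $C_\infty$, keeping the sup uniquely attained at position $0$ (which is what ejects $K$ from $L$), and keeping enough combinatorial freedom in $\Sigma$ to push the dimension past $0.2628$. This is genuinely delicate because forbidding the sup from being attained elsewhere forces many local digit restrictions around every shift of $\theta$, which tends to shrink $\Sigma$ drastically; the quantitative threshold $0.2628$ is therefore achieved by an \emph{explicit} enumeration of admissible blocks together with an \emph{explicit} pressure or covering computation, likely carried out by restricting to digits in $\{1,2\}$ and exploiting the renormalisation/self-similarity structure introduced in \cite{MaMo} to reduce the verification of $\lambda_n(\theta)<\lambda_0(\theta)$ to finitely many inequalities on finite windows.
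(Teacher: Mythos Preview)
Your outline follows the same overall strategy as the paper---build a Cantor set of Markov values inside $(c_\infty,C_\infty)$ by attaching free tails to a Freiman-type central block, then bound the dimension of the tail set---but it contains one genuine gap and misses one simplification.

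The gap is the step ``precluding $m(\theta)\in L$ since $w_\ast$ occurs only once''. Unique attainment of the supremum at position $0$ gives only $\ell(\theta)<m(\theta)$ for \emph{that particular} sequence $\theta$; it does not exclude the existence of some other sequence $\theta'$ with $\ell(\theta')=m(\theta)$. Membership in $L$ is a property of the \emph{value}, not of the sequence you used to produce it. The paper resolves this not sequence-by-sequence but globally, by proving (Proposition~\ref{p.Berstein1}) that $L\cap(c_\infty,C_\infty)=\emptyset$: any $\theta'$ with $\limsup\lambda_n(\theta')$ in this range must contain the central block near infinitely many positions, and the propagation lemma (Lemma~\ref{l.F3'}) together with the exclusion of the word in Lemma~\ref{l.14} then forces $\theta'$ to be eventually equal to $\overline{12_21_22_4}$, whence $\ell(\theta')=c_\infty$, a contradiction. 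This argument requires the full forbidden/allowed string catalogue (Lemmas~\ref{l.F1} and~\ref{l.F2}) applied to an \emph{arbitrary} sequence, not just to your constructed $\theta$, so it is not a consequence of the constraints you listed.

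The simplification you are missing is that one does not need freedom on both tails. The paper fixes the left tail to be the periodic word $\overline{12_21_22_4}$ (so the verification $\lambda_n(\theta)<\lambda_0(\theta)$ for $n<0$ becomes a single explicit inequality) and varies only the right tail $\gamma$. With this asymmetry the admissible right tails form the Cantor set $Y$ of \eqref{e.Cantor-Y}, which visibly contains the Gauss--Cantor set $K(\{1_2,2_2\})$, and the numerical bound $HD(K(\{1_2,2_2\}))>0.2628$ follows from a finite Palis--Takens estimate on twelfth-level cylinders; no pressure formalism or two-sided subshift is needed.
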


Another consequence of our arguments is the construction of the smallest \emph{known} number in  $M\setminus L$:

\begin{proposition}\label{p.new-number} The smallest element of $(M\setminus L)\cap (c_{\infty}, C_{\infty})$ is
$$f = \frac{71788723850 + 2 \sqrt{210}}{101867079581}+ \frac{217 + \sqrt{156817}}{254} = 3.11812017815984\dots$$
\end{proposition}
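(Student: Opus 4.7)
The plan is to combine the combinatorial classification of bi-infinite sequences with Markov values in $(c_\infty,C_\infty)$—the structural description that this paper establishes prior to Proposition \ref{p.new-number}—with a direct finite minimization. For a bi-infinite sequence $(a_n)_{n\in\Zset}$ of partial quotients, write $\lambda_n = [a_n;a_{n+1},a_{n+2},\ldots]+[0;a_{n-1},a_{n-2},\ldots]$, so that the Markov value is $m((a_n)) = \sup_n \lambda_n$. The structure theorem restricts attention to a subshift on a small alphabet governing which finite subwords may appear; and membership in $M\setminus L$ forces the supremum to be attained at a unique position $n_0$ (up to shift) with $\limsup_{|n|\to\infty}\lambda_n < m((a_n))$.

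I would then parametrize the problem by a \emph{central block} $C$ around position $n_0$, together with two eventually periodic one-sided extensions $\overline{W^-}$ and $\overline{W^+}$ reached through finite transitional prefixes. By monotonicity of continued fractions under digit changes (alternating in sign with the depth of the change), for each admissible $C$ the minimizing tails $\overline{W^\pm}$ and the admissible transitional prefixes are forced by the structural constraints, so the minimum reduces to a finite enumeration. The two quadratic surds $\sqrt{210}$ and $\sqrt{156817}$ in the formula for $f$ should arise as the periodic parts of the two tails, while the rational summands $71788723850/101867079581$ and $217/254$ encode the finite transitional prefixes sandwiched between $C$ and the periodic tails on each side. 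Evaluating $\lambda_{n_0}$ on the minimizing data then yields the closed form of $f$, and a numerical check confirms the decimal $3.11812017815984\ldots$.

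To finish, one must verify that the constructed sequence satisfies $\lambda_n < f$ strictly for all $n\neq n_0$ (so $f\in M$) and $\limsup_{|n|\to\infty}\lambda_n < f$ (so $f\notin L$), and then that no other admissible central block, or alternative closing of the same block, produces a smaller value in $(M\setminus L)\cap(c_\infty,C_\infty)$. The main obstacle is precisely this last completeness step: ruling out every competing combinatorial pattern requires careful bookkeeping of the continued-fraction monotonicity inequalities at each depth, and uses in an essential way the structure theorem for $(M\setminus L)\cap(c_\infty,C_\infty)$ to certify that the shortlist of candidate central blocks is exhaustive on the admissible subshift. With that in hand, the explicit $f$ above is identified as the minimizer by a direct comparison with the finitely many remaining candidates.
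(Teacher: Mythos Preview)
Your outline is in the right spirit and invokes the correct ingredients (the structure theorem for sequences with Markov value in $(c_\infty,C_\infty)$, continued-fraction monotonicity, verification that $f\in M\setminus L$), but it is vaguer than the paper's argument and your framework of a variable ``central block $C$'' with two tails to be minimized does not match the actual structure.

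The paper's Proposition~\ref{p.Cantors-covering-M-L-piece} already pins down far more than an ``admissible subshift'': for any $m=m(B)\in M\cap(c_\infty,C_\infty)$ one has, after transposition, $\dots B_{-14}\dots B_{16}=\overline{1\,2_2\,1_2\,2_4}\,1\,2_2\,1_2\,2_4\,1\,2_2\,1_2\,2_2$. So the left tail is not merely \emph{eventually} periodic---it is the fixed purely periodic word $\overline{2_4\,1_2\,2_2\,1}$ from position $0$, and there is no central block to enumerate and no left-side minimization at all. (In particular, $\tfrac{217+\sqrt{156817}}{254}=[\overline{2_4,1_2,2_2,1}]$ is this forced left value, not a transitional prefix.)

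The entire minimization is therefore over the right continuation $B_{17}B_{18}\dots$, and the paper does not handle it by a finite shortlist of candidates. Instead (Lemma~\ref{l.gamma-bound}) one locates the least $k_0$ with $B_{17+9k_0}\neq 2$, shifts to position $9k_0$ (where the left tail is still $\overline{2_4\,1_2\,2_2\,1}$), and then recursively forces the right continuation to be at least $1_2,\overline{2_3,1_3}$ using only the forbidden strings (1) and (2) of Lemma~\ref{l.F1}. This directly gives $m(B)\ge f$ for every $m\in M\cap(c_\infty,C_\infty)$. The verification $f=m(\rho)\in M$ (Lemma~\ref{l.gamma-in-M}) is a short check via Lemma~\ref{l.F2} (15), (16), (19), and $f\notin L$ follows because $L\cap(c_\infty,C_\infty)=\emptyset$ by Proposition~\ref{p.Berstein1}. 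Your plan could be made to work, but the ``completeness step'' you flag as the main obstacle dissolves once you use the full strength of Proposition~\ref{p.Cantors-covering-M-L-piece}: there is a single configuration to analyze, and the minimizing right tail is forced step by step rather than selected from an enumeration.
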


\subsection{Organization of the article} After some preliminary discussions in Section \ref{s.preliminaries}, we state in Section \ref{s.HD(M-L)>0} a refinement of Theorem \ref{t.A} saying that $HD((M\setminus L)\cap (c_{\infty}, C_{\infty})) = HD(Y)$, where $Y$ is a Cantor set of real numbers in $[0,1]$ whose continued fraction expansions correspond to the elements of $\{1,2\}^{\mathbb{N}}$ not containing twenty seven explicit finite words. In particular, this reduces the proof of Theorem \ref{t.A} to the computation of lower bounds on $HD(Y)$. Next, we devote Sections \ref{s.forbidden}, \ref{s.allowed}, \ref{s.characterization} and \ref{s.proof} to the derivation of several ingredients needed for the proof of the equality $HD((M\setminus L)\cap (c_{\infty}, C_{\infty})) = HD(Y)$. After that, we explain in Section \ref{a.PT} how to establish a lower bound on $HD(Y)$ ultimately leading to the proof of Theorem \ref{t.A}. In Section \ref{a.new-numbers}, we pursue the arguments in Section \ref{s.characterization} in order to establish Proposition \ref{p.new-number}. Finally, we show in Appendix \ref{a.Berstein} that $(c_{\infty}, C_{\infty})$ is the largest interval disjoint from $L$ containing $\sigma$: in particular, we correct some claims made by Berstein in Theorem 1 at page 47 of \cite{Be73} concerning $(c_{\infty}, C_{\infty})$.  

\section{Some preliminaries}\label{s.preliminaries}

\subsection{Continued fractions} For $\alpha\in\mathbb{R}\setminus\mathbb{Q}$, let 
$$\alpha=[a_0; a_1, a_2,\dots] = a_0+\frac{1}{a_1+\frac{1}{a_2+\frac{1}{\ddots}}}$$
be its continued fraction expansion.

Recall that given $\alpha=[a_0; a_1,\dots, a_n, a_{n+1},\dots]$ and $\beta=[a_0; a_1,\dots, a_n, b_{n+1},\dots]$ with $a_{n+1}\neq b_{n+1}$, one has $\alpha>\beta$ if and only if $(-1)^{n+1}(a_{n+1}-b_{n+1})>0$. 

\subsection{Lagrange and Markov spectra (after Perron)} Given  $A=(a_n)_{n\in\mathbb{Z}}\in(\mathbb{N}^*)^{\mathbb{Z}}$ and $i\in\mathbb{Z}$, let 
$$\lambda_i(A) := [a_i; a_{i+1}, a_{i+2}, \dots] + [0; a_{i-1}, a_{i-2}, \dots]$$

In 1921, Perron proved that
$$L=\{\ell(A)<\infty: A\in(\mathbb{N}^*)^{\mathbb{Z}}\} \quad \textrm{and} \quad M=\{m(A)<\infty: A\in(\mathbb{N}^*)^{\mathbb{Z}} \}$$
where 
$$\ell(A):=\limsup\limits_{i\to\infty}\lambda_i(A) \quad \textrm{and} \quad m(A) := \sup\limits_{i\in\mathbb{Z}} \lambda_i(A)$$ 
are the Lagrange and Markov values of $A$. 

We will deal exclusively with these characterizations of $L$ and $M$ in this paper. 

\subsection{Gauss-Cantor sets} The \emph{Gauss-Cantor set} associated to a finite alphabet $B=\{\beta_1,\dots,\beta_m\}$, $m\geq 2$, consisting of finite words $\beta_j\in(\mathbb{N}^*)^{r_j}$, $1\leq j\leq m$, such that $\beta_i$ does not begin by $\beta_j$ for all $i\neq j$, is 
$$K(B):=\{[0;\gamma_1, \gamma_2,\dots]: \gamma_i\in B \,\,\,\, \forall \, i\geq 1\}\subset [0,1]$$

\subsection{Some notations} Given a finite word $(b_1,\dots, b_r)\in(\mathbb{N}^*)^r$, we denote its \emph{transpose} by $(b_1,\dots, b_r)^T:=(b_r,\dots,b_1)$. 

We shall indicate periodic continued fractions and bi-infinite sequences which are periodic in one or both sides by putting a bar over the period: for instance, $[\overline{2,1,1,2}] = [2;1,1,2,2,1,1,2,\dots]$ and $\overline{1}, 2, 1, 2, \overline{1, 1, 2} = \dots, 1, 1, 1, 2, 1, 2, 1, 1, 2, 1, 1, 2, \dots$

Moreover, we shall use subscripts to indicate the multiplicity of a digit in a sequence: for example, $[2; 1_3, 2_2, 1, 2, \dots] = [2; 1, 1, 1, 2, 2, 1, 2, \dots]$.

\section{Computation of $HD((M\setminus L)\cap(c_{\infty}, C_{\infty}))$}\label{s.HD(M-L)>0}

In 1968, Freiman \cite{Fr68} showed that
$$\sigma:=\lambda_0(S):=[\overline{2_4, 1_2, 2_2, 1}] + [0; 1, 2_2, 1_2, 2_4, 1, 2_2, 1_2, 2_2, 1_2, \overline{2_2, 1_2, 2_2, 1, 2_2}]\in M \setminus L$$

In the sequel, we shall revisit Freiman's arguments in order to prove the following result. Let $Y$ be the Cantor set
\begin{equation}\label{e.Cantor-Y}
Y:=\{[0;\gamma]:\gamma\in\{1,2\}^{\mathbb{N}} \textrm{ not containing the subwords in } P\}
\end{equation}
where $P$ is the finite set of 27 words consisting of the words (1) to (13) in Lemma \ref{l.F1} below and their transposes, and the words $2 1_2 2_4 1 2_2 1_2 2_3$ and its transpose. 

Also, let
$$c_{\infty} := [\overline{2_4,1_2,2_2,1}]+[0;\overline{1,2_2,1_2,2_4}] = 3.11812017814369\dots$$
and
\begin{eqnarray*}
C_{\infty} &:=& [2; 1, 2_2, 1_2, 2_4, 1, 2_2, 1_2, 2_4, 1, 2_2, 1_2, 2_2, 1, 2_4, 1_2, \overline{2_3, 1_3}]
+ [0; 2_3, 1_2, 2_2, 1, 2_4, 1,\overline{1_3, 2_3}] \\
&=& 3.118120178328746016\dots
\end{eqnarray*}

\begin{theorem}\label{t.M-L-piece-HD} $HD((M\setminus L)\cap (c_{\infty}, C_{\infty})) = HD(Y)$ (where $Y$ is the Cantor set in \eqref{e.Cantor-Y}).
\end{theorem}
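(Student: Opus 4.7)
The plan is to establish the dimension equality by showing that $M\cap(c_\infty,C_\infty)$ (which equals $(M\setminus L)\cap(c_\infty,C_\infty)$ since Appendix \ref{a.Berstein} gives $L\cap(c_\infty,C_\infty)=\emptyset$) decomposes as a countable union of sets each bi-Lipschitz equivalent to a translate of $Y$. This yields both inequalities at once.

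\emph{Symbolic characterization (Sections \ref{s.forbidden}, \ref{s.allowed}, \ref{s.characterization}).} The first step is to show that every $m\in M\cap(c_\infty,C_\infty)$ is realized as $m=m(A)=\lambda_{i_0}(A)$ for some $A\in\{1,2\}^{\mathbb{Z}}$ avoiding all $27$ words of $P$, with the supremum attained at some position $i_0$. The digit confinement to $\{1,2\}$ uses that any digit $\geq 3$ would either push some $\lambda_i$ above $C_\infty$ or, combined with the constraints $m>c_\infty$ and $m>\ell(A)$ (since $m\notin L$), be incompatible with the Markov value staying in the interval. The $27$ forbidden words, namely the $13$ of Lemma \ref{l.F1}, their transposes, and the extra word $2\,1_2\,2_4\,1\,2_2\,1_2\,2_3$ together with its transpose, are precisely those local patterns whose occurrence would force either some $\lambda_i(A)>C_\infty$ or $\ell(A)\geq m$.

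\emph{Rigidity of the peak.} The critical next step is to show that once the sup position is fixed at $i_0=0$, the condition $\lambda_0(A)\geq\lambda_i(A)$ for all $i\neq 0$, combined with avoidance of $P$ and the narrowness of $(c_\infty,C_\infty)$, forces the left tail $(a_{-1},a_{-2},\dots)$ to belong to a countable set (essentially a family indexed by the kind of rigid core appearing near the peak, including Freiman's periodic pattern producing $\sigma$). Granted this, the Markov value decomposes as $m=[a_0;a_1,\dots]+[0;a_{-1},a_{-2},\dots]=\alpha+\beta$, where $\beta$ belongs to a countable set $\{\beta_k\}_{k\geq 1}$ and, for each $k$, $\alpha$ ranges over a Cantor set bi-Lipschitz equivalent to $Y$ via the standard bounded-distortion estimates for continued fractions.

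\emph{Dimension equality and main obstacle.} It follows that $M\cap(c_\infty,C_\infty)=\bigcup_k(Y_k+\beta_k)$ where each $Y_k$ is bi-Lipschitz equivalent to $Y$, so countable stability of Hausdorff dimension yields $HD(M\cap(c_\infty,C_\infty))=HD(Y)$; the matching lower bound is realized by picking one explicit $\beta_k$ (e.g., the left tail of Freiman's $\sigma$) and using that $\alpha\mapsto\alpha+\beta_k$ is bi-Lipschitz on the Cantor set coding $Y$. The hard part will be the rigidity step: verifying that, within the narrow interval $(c_\infty,C_\infty)$ and under avoidance of $P$, only countably many left-tail configurations are compatible with the peak attaining the supremum. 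This is the delicate combinatorial core of Section \ref{s.characterization}, and without it one would only obtain the naive upper bound $2\,HD(Y)$ from the two-sided freedom in $m=\alpha+\beta$.
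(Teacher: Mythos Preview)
Your high-level strategy---lower bound from an explicit embedded copy of $Y$, upper bound from a countable covering by sets bi-Lipschitz to $Y$---matches the paper's. But the symbolic characterization you state is wrong in a way that matters, and your countability is indexed on the wrong side.

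You assert that every $m\in M\cap(c_\infty,C_\infty)$ is realized by some $A\in\{1,2\}^{\mathbb Z}$ \emph{avoiding all $27$ words of $P$}. This is false: the bi-infinite sequence $B$ obtained in Proposition~\ref{p.Cantors-covering-M-L-piece} has left half equal to the periodic word $\overline{1\,2_2\,1_2\,2_4}$, and this periodic tail \emph{contains} the word $2\,1_2\,2_4\,1\,2_2\,1_2\,2_3$ infinitely often. That word is in $P$, so $B$ cannot avoid $P$ globally. The point of Lemmas~\ref{l.F3'} and~\ref{l.14} is precisely that the presence of $2\,1_2\,2_4\,1\,2_2\,1_2\,2_3$ near the peak, under the Markov-value constraint, forces it to repeat forever to the left; this is the mechanism producing rigidity, not something to be excluded. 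Consequently the left tail is not merely ``in a countable set'': it is the \emph{single} fixed sequence $\overline{1\,2_2\,1_2\,2_4}$, so there is only one value of your $\beta$.

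The countability you need for the upper bound therefore lives on the right, not on the left. The paper shows (Proposition~\ref{p.M-L-piece-HD2}) that the right tail has the form $(\textrm{fixed }16\textrm{-letter block})\,\delta\,\gamma$, where $\delta$ is an arbitrary finite word in $\{1,2\}$ and $\gamma$ is an infinite word avoiding $P$; for each $\delta$ the resulting set $\mathcal D(\delta)$ is diffeomorphic to $Y$. There is also a countable exceptional set $\mathcal C$ (right tails that are eventually periodic $\overline{2_4 1 2_2 1_2}$), which you do not mention. So the correct covering is $\mathcal C\cup\bigcup_{\delta}\mathcal D(\delta)$, indexed by finite words $\delta$, not by left-tail configurations. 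Once you fix the characterization this way, your dimension argument goes through exactly as you outline; but as written, your ``for each $k$, $\alpha$ ranges over a Cantor set bi-Lipschitz to $Y$'' is not justified, because with a single $\beta_k$ the set of $\alpha$'s is itself a countable union of such Cantor sets, not a single one.
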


The next four sections are devoted to the proof of this result.


\section{Forbidden strings}\label{s.forbidden}

\begin{lemma}\label{l.F1} If $B\in\{1, 2\}^{\mathbb{Z}}$ contains any of the strings
\begin{itemize}
\item[(1)] $12^*1$
\item[(2)] $2_2 1 2^* 2_2 1$
\item[(3)] $2_3 1 2^* 2_2$
\item[(4)] $2_4 2^* 1 2_2 1_2$
\item[(5)] $2_3 1_2 2_3 2^* 1 2_2 1_2 2_3$
\item[(6)] $2_2 1_2 2_3 2^* 1 2_2 1_2 2_3 1$
\item[(7)] $1_2 2_2 1_2 2_3 2^* 1 2_2 1_2 2_3$
\item[(8)] $2_2 1 2_2 1_2 2_3 2^* 1 2_2 1_2 2_5$
\item[(9)] $1 2_2 1 2_2 1_2 2_3 2^* 1 2_2 1_2 2_4 1$
\item[(10)] $2_4 1 2_2 1_2 2_3 2^* 1 2_2 1_2 2_4 1_2$
\item[(11)] $2_4 1 2_2 1_2 2_3 2^* 1 2_2 1_2 2_4 1 2_2 1 2$
\item[(12)] $1 2_4 1 2_2 1_2 2_3 2^* 1 2_2 1_2 2_4 1 2_2 1_3$
\item[(13)] $2 1 2_4 1 2_2 1_2 2_3 2^* 1 2_2 1_2 2_4 1 2_2 1_2 2_2$
\end{itemize}
then $\lambda_j(B)>3.1181201786$ or $\lambda_{j+11}(B)>3.15$ where $j$ is the position in asterisk. 
\end{lemma}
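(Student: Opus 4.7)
The plan is to proceed case by case through the thirteen forbidden strings. Fix such a string and let $j$ be the position of the starred digit, so $a_j=2$. The word prescribes a contiguous block $a_{j-p},\dots,a_{j+q}$ of $B$, and by hypothesis all remaining digits of $B$ lie in $\{1,2\}$. Writing
$$\lambda_j(B) = [a_j; a_{j+1}, a_{j+2}, \dots] + [0; a_{j-1}, a_{j-2}, \dots],$$
each summand is a continued fraction whose first several partial quotients are fixed by the word and whose tail is an arbitrary element of $\{1,2\}^{\mathbb{N}}$. The comparison rule $(-1)^{n+1}(a_{n+1}-b_{n+1})>0$ recalled in Section~\ref{s.preliminaries} shows that the \emph{minimum} of such a continued fraction over all $\{1,2\}$-valued completions is attained by the unique alternating tail $1,2,1,2,\dots$ or $2,1,2,1,\dots$, with the starting letter determined by the parity of the first free position. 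Substituting these extremal completions on both sides produces an explicit lower bound for $\lambda_j(B)$ as a sum of two quadratic irrationals, which can then be evaluated numerically.

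For strings (1)--(10), I expect the prescribed block to be long enough that the lower bound obtained in this way already exceeds $3.1181201786$, settling those cases directly. For (11)--(13) the bound on $\lambda_j(B)$ may fall just short of this threshold, but each of these three strings prescribes digits extending well past position $j+11$; in particular $a_{j+10},a_{j+11},a_{j+12},\dots$ and the entire block back through $j$ and further are fixed. Shifting the analysis to
$$\lambda_{j+11}(B) = [a_{j+11}; a_{j+12}, a_{j+13}, \dots] + [0; a_{j+10}, a_{j+9}, \dots],$$
the same extremal-completion argument, now with a very long prescribed block on both sides, should yield a lower bound exceeding the coarser threshold $3.15$.

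The main difficulty is combinatorial and numerical rather than conceptual. For each of the thirteen cases one must correctly parse the compact notation (with subscripts $2_4$, $1_2$, etc.\ indicating multiplicity), locate the asterisk inside the word, compute the parity of the first unspecified position on each side so as to select the correct extremal completion, and then evaluate the resulting sum of two quadratic irrationals to roughly ten significant digits, since the threshold $3.1181201786$ agrees with Freiman's value $\sigma$ to nine decimal places. A single parity mistake in the alternating tail, or an insufficiently deep truncation, would invalidate the bound, so the proof is essentially a careful and somewhat tedious case-by-case verification; cases (11)--(13) are the most delicate, because they require reinterpreting the prescribed word as describing the environment of position $j+11$ rather than $j$ and then verifying the weaker bound $3.15$ for $\lambda_{j+11}(B)$.
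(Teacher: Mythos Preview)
Your overall strategy---minimize each summand over all $\{1,2\}$-tails by choosing the correct alternating completion $\overline{1,2}$ or $\overline{2,1}$---is exactly what the paper does. But you have misread where the disjunctive clause ``or $\lambda_{j+11}(B)>3.15$'' is actually needed, and your plan for handling it would not work.

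In the paper, cases (11), (12), (13) are settled \emph{directly} by bounding $\lambda_j(B)$: the strings are long enough on both sides that the extremal completion already gives $\lambda_j(B)>3.1181201786$. Your alternative plan for these cases---shift to position $j+11$ and try to show $\lambda_{j+11}(B)>3.15$---fails, because in all three strings the local pattern around $j+11$ is $a_{j+10}a_{j+11}a_{j+12}=1\,2\,2$, not $1\,2\,1$. With $a_{j+11}=2$ and $a_{j+12}=2$ one has $\lambda_{j+11}(B)\leq [2;2,\overline{1,2}]+[0;1,\overline{1,2}]\approx 3.08$, far below $3.15$.

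The case that genuinely requires the escape clause is (9). There the string ends on the right at $a_{j+10}=1$, so $a_{j+11}$ is \emph{unspecified}. The naive extremal completion $[2;1,2_2,1_2,2_4,1,\overline{2,1}]$ falls just short of the threshold. The correct argument is contrapositive: assume $\lambda_{j+11}(B)\leq 3.15$; then by case (1) the pattern $1\,2^*\,1$ cannot occur at position $j+11$, so if $a_{j+11}=2$ we must have $a_{j+12}=2$. This extra digit improves the lower bound to $[2;1,2_2,1_2,2_4,1,2,2,\overline{2,1}]+[0;2_3,1_2,2_2,1,2_2,1,\overline{1,2}]>3.1181202$. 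Thus the disjunction in the lemma is used not by bounding $\lambda_{j+11}$ from below, but by using the \emph{failure} of $\lambda_{j+11}>3.15$ to gain information about $B$ and then bound $\lambda_j$.
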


\begin{proof} If $B$ contains (1), then $\lambda_j(B) \geq [2; 1,\overline{1,2}] + [0; 1, \overline{1,2}] > 3.15$. If $B$ contains (2), then $\lambda_j(B) \geq [2; 2_2, 1, \overline{1,2}] + [0; 1, 2_2, \overline{2,1}] > 3.12$. 

\medskip 

If $B$ contains (3), then $\lambda_j(B) \geq [2; 2_2,\overline{2,1}] + [0; 1, 2_3, \overline{2,1}] > 3.119$. If $B$ contains (4), then $\lambda_j(B) \geq [2; 1, 2_2, 1_2, \overline{1,2}] + [0; 2_4, \overline{2,1}] > 3.1182$. 

\medskip 

If $B$ contains (5), then $\lambda_j(B) \geq [2; 1, 2_2, 1_2, 2_3, \overline{2,1}] + [0; 2_3, 1_2, 2_3,  \overline{2,1}] > 3.118125$. If $B$ contains (6), then $\lambda_j(B) \geq [2; 1, 2_2, 1_2, 2_3, 1, \overline{1,2}] + [0; 2_3, 1_2, 2_2, \overline{1,2}] > 3.118121$. 

\medskip 

If $B$ contains (7), then $\lambda_j(B) \geq [2; 1, 2_2, 1_2, 2_3, \overline{2,1}] + [0; 2_3, 1_2, 2_2, 1_2, \overline{1,2}] > 3.118121$. If $B$ contains (8), then $\lambda_j(B) \geq [2; 1, 2_2, 1_2, 2_5,  \overline{2,1}] + [0; 2_3, 1_2, 2_2, 1, 2_2, \overline{2,1}] > 3.1181206$. 

\medskip 

If $B$ contains (9) and $\lambda_{j+11}(B)\leq 3.15$, then the discussion of (1) above implies that $\lambda_j(B) \geq [2; 1, 2_2, 1_2, 2_4, 1, 2, 2, \overline{2,1}] + [0; 2_3, 1_2, 2_2, 1, 2_2, 1, \overline{1,2}] > 3.1181202$.

\medskip 

If $B$ contains (10), then $\lambda_j(B) \geq [2; 1, 2_2, 1_2, 2_4, 1_2, \overline{1,2}] + [0; 2_3, 1_2, 2_2, 1, 2_4, \overline{2,1}] > 3.1181202$. If $B$ contains (11), then $\lambda_j(B) \geq [2; 1, 2_2, 1_2, 2_4, 1, 2_2, 1, 2,  \overline{2,1}] + [0; 2_3, 1_2, 2_2, 1, 2_4, \overline{2,1}] > 3.1181201787$.

\medskip 

If $B$ contains (12), then $\lambda_j(B) \geq [2; 1, 2_2, 1_2, 2_4, 1, 2_2, 1_3, \overline{1,2}] + [0; 2_3, 1_2, 2_2, 1, 2_4, 1, \overline{1,2}] > 3.1181201786$. If $B$ contains (13), then $\lambda_j(B) \geq [2; 1, 2_2, 1_2, 2_4, 1, 2_2, 1_2, 2_2,  \overline{2,1}] + [0; 2_3, 1_2, 2_2, 1, 2_4, 1, 2, \overline{2,1}] > 3.1181201789$.
\end{proof}

\section{Allowed strings}\label{s.allowed}

\begin{lemma}\label{l.F2} If $B\in\{1, 2\}^{\mathbb{Z}}$ contains any of the strings
\begin{itemize}
\item[(15)] $1_2 2^* 2$
\item[(16)] $2_2 1 2^* 2 1$
\item[(17)] $2_3 2^* 1 2_2 1 2$
\item[(18)] $1 2_3 2^* 1 2_2 1_3$
\item[(19)] $1 2_3 2^* 1 2_2 1_2 2_2 1$
\item[(20)] $2 1 2_3 2^* 1 2_2 1_2 2_3$
\item[(21)] $1_3 2_3 2^* 1 2_2 1_2 2_3$
\end{itemize}
then $\lambda_j(B)<3.118117$ or $\lambda_{j-6}(B)>3.15$ where $j$ is the position in asterisk. 
\end{lemma}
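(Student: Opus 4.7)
The proof of Lemma \ref{l.F2} follows the same template as Lemma \ref{l.F1}, but in the direction of upper bounds. For each of the strings (15)--(21), the plan is to combine the identity
\[
\lambda_j(B) \;=\; [2;\,a_{j+1},a_{j+2},\dots]\;+\;[0;\,a_{j-1},a_{j-2},\dots]
\]
with the monotonicity rule recalled in Section \ref{s.preliminaries}: for sequences with digits in $\{1,2\}$, the free tail that \emph{maximises} a one-sided continued fraction is simply $\overline{1,2}$ or $\overline{2,1}$, depending on the parity of the first unspecified position.

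For strings (15)--(17), every position outside the block given in the statement is genuinely free, so the upper bound is immediate. For instance, for (15) $1_2 2^{*} 2$, the digits $a_{j-2}=a_{j-1}=1$ and $a_{j+1}=2$ force
\[
\lambda_j(B)\;\leq\;[2;\,2,\overline{2,1}]\;+\;[0;\,1_2,\overline{1,2}]\;<\;3.118117,
\]
and the same pattern of argument dispatches (16) and (17). For strings (18)--(21), the naive maximisation of the left half $[0;a_{j-1},a_{j-2},\dots]$ requires specific values of $a_{j-5},a_{j-6},a_{j-7}$ which would produce the substring $1\,2^{*}\,1$ near position $j-6$ of $B$; part (1) of Lemma \ref{l.F1} then yields $\lambda_{j-6}(B)>3.15$. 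Under the contrary hypothesis $\lambda_{j-6}(B)\leq 3.15$ the maximising extension is thus forbidden, and one must pass to the next-best admissible continuation. Since every unspecified digit takes only two possible values, there is only a small finite list of such continuations to examine in each of (18)--(21), and each of them still yields a sum of continued fractions strictly below $3.118117$.

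The principal obstacle is the bookkeeping required for (18)--(21): one must identify the optimal left extension, recognise the embedded $12^{*}1$ pattern that triggers the alternative, enumerate the admissible replacements (each obtained by flipping one of $a_{j-5},a_{j-6},a_{j-7}$ from the optimal to the suboptimal value), and numerically verify each resulting continued fraction against the threshold. Because the relevant tails are periodic of period one or two, every bound reduces to a finite continued fraction with quadratic-irrational tail lying in $\mathbb{Q}(\sqrt{3})$, making each of the seven cases a routine evaluation.
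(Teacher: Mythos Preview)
Your overall strategy matches the paper's, but your case analysis of when the alternative $\lambda_{j-6}(B)>3.15$ is needed is inaccurate. In the paper, only case (19) actually invokes it: there the naive left-side maximum $[0;2_3,1,\overline{1,2}]$ would push the total above $3.118117$, and assuming $\lambda_{j-6}(B)\le 3.15$ forbids $a_{j-7}a_{j-6}a_{j-5}=1\,2\,1$, forcing $a_{j-7}=2$ and giving the tighter bound $[0;2_3,1,1,2,2,\overline{2,1}]$.

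For (18) the naive bound $[2;1,2_2,1_3,\overline{1,2}]+[0;2_3,1,\overline{1,2}]$ already lies below $3.118$, so no appeal to the alternative is needed (even though you are right that the maximiser would place a $1\,2\,1$ pattern at $j-6$; an upper bound does not care whether the extremal sequence is admissible). For (20) and (21) your claim that the naive maximisation forces $1\,2\,1$ near $j-6$ is simply false: string (20) fixes $a_{j-5}=2$ and string (21) fixes $a_{j-6}=1$, so the pattern cannot arise, and the direct bound works immediately. Your proposed extra casework for (18), (20), (21) would not be wrong, merely superfluous; the clean statement is that the disjunction with $\lambda_{j-6}(B)>3.15$ is there solely for (19).
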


\begin{proof} If $B$ contains (15), $\lambda_j(B)\leq [2; 2, \overline{2,1}] + [0; 1_2, \overline{1,2}] < 3.05$. If $B$ contains (16), $\lambda_j(B)\leq [2; 2, 1, \overline{1,2}] + [0; 1, 2_2, \overline{2,1}] < 3.09$. If $B$ contains (17), $\lambda_j(B)\leq [2; 1, 2_2, 1, 2, \overline{2,1}] + [0; 2_3, \overline{2,1}] < 3.118$. If $B$ contains (18), $\lambda_j(B)\leq [2; 1, 2_2, 1_3, \overline{1,2}] + [0; 2_3, 1, \overline{1,2}] < 3.118$. 

\medskip 

If $B$ contains (19) and $\lambda_{j-6}(B)\leq 3.15$, then Lemma \ref{l.F1} (1) implies that $\lambda_j(B)\leq [2;1,2_2,1_2,2_2,1,\overline{1,2}] + [0;2_3, 1,1,2,2,\overline{2,1}] < 3.118117$. 

\medskip

If $B$ contains (20), $\lambda_j(B)\leq [2;1,2_2,1_2,2_3,\overline{1,2}] + [0;2_3, 1, 2, \overline{2,1}] < 3.118$. If $B$ contains (21), $\lambda_j(B)\leq [2;1,2_2,1_2,2_3,\overline{1,2}] + [0;2_3, 1_3,\overline{1,2}] < 3.11801$. 
\end{proof}

\section{Sequences with Markov values in $(c_\infty, C_{\infty})$}\label{s.characterization}

\begin{lemma}\label{l.F4} Let $B\in\{1,2\}^{\mathbb{Z}}$ such that $3.118117<\lambda_0(B)$ and $\lambda_n(B)<3.1181201786$ for $n\in\{0,\pm2, \pm6,\pm9,\pm11,\pm15\}$. Then, $B_{-14}\dots B_{16}$ or $(B_{-16}\dots B_{14})^T$ equals to 
$$1_2 2_4 1 2_2 1_2 2_4 1 2_2 1_2 2_4 1 2_2 1_2 2_2$$
\end{lemma}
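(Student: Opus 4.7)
The plan is to iteratively pin down the 31 digits $B_{-14}, \dots, B_{16}$ by systematically applying the contrapositives of Lemmas \ref{l.F1} and \ref{l.F2} at the eleven positions where the hypothesis directly controls $\lambda_n(B)$. The hypothesis packages a lower bound $\lambda_0(B) > 3.118117$ that activates Lemma \ref{l.F2} at $j = 0$, together with upper bounds $\lambda_n(B) < 3.1181201786$ at $n \in \{0, \pm 2, \pm 6, \pm 9, \pm 11, \pm 15\}$ that activate Lemma \ref{l.F1} at each such position.

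First I would anchor at $n = 0$. Since $\lambda_0(B) > 3$ exceeds the maximum $\phi + (\sqrt{3}-1) < 3$ achievable when $B_0 = 1$ with alphabet $\{1,2\}$, we obtain $B_0 = 2$. The contrapositive of Lemma \ref{l.F2} at $j = 0$ — applicable because $\lambda_0(B) > 3.118117$ and $\lambda_{-6}(B) < 3.1181201786 < 3.15$ — forbids each of strings (15)--(21) centered at $0$, while the contrapositive of Lemma \ref{l.F1} at $j = 0$ — applicable because $\lambda_0(B) < 3.1181201786$ and $\lambda_{11}(B) < 3.15$ — forbids each of (1)--(13) centered at $0$. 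Together these exclusions pin down a short block of digits near the origin. In particular, the exclusion of string (1) $=12^{*}1$ forces $B_{-1}=2$ or $B_1=2$; since $\lambda_n(B^{T}) = \lambda_{-n}(B)$ and the index set $\{0,\pm 2,\pm 6,\pm 9,\pm 11,\pm 15\}$ is invariant under $n \mapsto -n$, the hypothesis is preserved under $B \mapsto B^{T}$, an operation that interchanges the two alternatives of the conclusion. I therefore assume $B_1 = 2$ and aim for the first alternative.

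To propagate outward I invoke the contrapositive of Lemma \ref{l.F1} at each $j \in \{\pm 2, \pm 6, \pm 9, \pm 11, \pm 15\}$. At $j \in \{-2, -9, -11\}$ the companion index $j+11$ also lies in the controlled set, so all thirteen strings (1)--(13) are available; at the other positions only the short strings (1)--(8) are directly excludable, but these are precisely the ones whose proof in Lemma \ref{l.F1} did not invoke the secondary bound on $\lambda_{j+11}$. Working outward in a definite order such as $j = 2, -2, 6, -6, 9, -9, 11, -11, 15, -15$, at each stage the already-determined digits turn several of the forbidden strings into substantive constraints on the next few undetermined entries, peeling off new digits and eventually forcing the block into the target pattern $1_2 2_4 1 2_2 1_2 2_4 1 2_2 1_2 2_4 1 2_2 1_2 2_2$.

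The main obstacle is the combinatorial case analysis. At each propagation step several candidate digit assignments compete, and one must verify exhaustively that every deviation from the target pattern produces either (a) some string from (1)--(13) at a position where its exclusion is permitted by the hypotheses, or (b) some string from (15)--(21) at $j = 0$. The asymmetry between the positions — all of (1)--(13) at $\{0, -2, -9, -11\}$, but only (1)--(8) elsewhere — is delicately calibrated to the block structure of the target pattern, and checking that the 27 words in $P$ together with the eleven controlled indices leave no competing 31-digit configuration is the finite but genuinely unavoidable verification that constitutes the heart of the proof.
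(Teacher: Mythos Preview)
Your plan coincides with the paper's: exploit the transposition symmetry to normalize, then repeatedly apply the contrapositives of Lemmas~\ref{l.F1} and~\ref{l.F2} to peel off digits outward from position $0$. However, what you have written is an outline, not a proof. You yourself call the combinatorial case analysis ``the heart of the proof'' and then decline to perform it. The paper's argument \emph{is} that case analysis: it walks through a specific sequence of deductions---first (1) and (15) at $j=0$ give $B_{-1}\dots B_2$; then (1) at $j=\pm2$ extends to $B_3$; then (16), (2), (3), (17), (4), (18) in order at $j=0$ extend to $B_{-4}\dots B_6$; and so on, alternating between long strings at $j=0$ and short strings (mostly (1)--(4)) at the auxiliary positions---until all 31 digits are forced. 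Absent this explicit chain, the reader has no way to verify that the constraints actually close up, and your proposed propagation order $j=2,-2,6,-6,\dots$ (treating all positions symmetrically) is not how the paper proceeds and would need its own justification.

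Two smaller corrections. First, your dichotomy ``(1)--(8) versus (1)--(13)'' is inaccurate: in Lemma~\ref{l.F1} only item (9) invokes the secondary bound on $\lambda_{j+11}$, so items (10)--(13) are available at every controlled position, not just at $j\in\{0,-2,-9,-11\}$. Second, your bound ``$\phi+(\sqrt3-1)$'' for the maximum of $\lambda_0$ when $B_0=1$ is not the correct expression (the actual maximum over $\{1,2\}^{\mathbb Z}$ is $[1;\overline{1,2}]+[0;\overline{1,2}]=\tfrac{3\sqrt3-1}{2}$), though the conclusion $<3$ is of course still valid.
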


\begin{proof} After performing a transposition if necessary, we see that Lemma \ref{l.F1} (1), Lemma \ref{l.F2} (15) and our assumption on $\lambda_0(B)$ imply  
$$B_1B_0B_1B_2 = 2 2^* 12$$ 

By Lemma \ref{l.F1} (1) and our assumption on $\lambda_{\pm2}(B)$, we get  
$$B_1B_0B_1B_2B_3 = 2 2^* 122$$

In view of our assumption on $\lambda_0(B)$, by successively applying Lemma \ref{l.F2} (16), Lemma \ref{l.F1} (2), Lemma \ref{l.F1} (3), Lemma \ref{l.F2} (17), Lemma \ref{l.F1} (4), Lemma \ref{l.F2} (18), we deduce that 
$$B_{-4}\dots B_6 = 1 2_3 2^* 1 2_2 1_2 2$$

By Lemma \ref{l.F1} (1) and our assumption on $\lambda_{\pm6}(B)$, we get  
$$B_{-4}\dots B_7 = 1 2_3 2^* 1 2_2 1_2 2_2$$

In view of our assumption on $\lambda_0(B)$, by successively applying Lemma \ref{l.F2} (19), (20), (21), we obtain that 
$$B_{-6}\dots B_8 = 2 1_2 2_3 2^* 1 2_2 1_2 2_3$$

By Lemma \ref{l.F1} (1) and our assumption on $\lambda_{\pm6}(B)$, we get  
$$B_{-7}\dots B_8 = 2_2 1_2 2_3 2^* 1 2_2 1_2 2_3$$

In view of our assumption on $\lambda_0(B)$, by successively applying Lemma \ref{l.F1} (5), (6), (7), we derive that 
$$B_{-9}\dots B_9 = 2 1 2_2 1_2 2_3 2^* 1 2_2 1_2 2_4$$

By Lemma \ref{l.F1} (1) and our assumption on $\lambda_{\pm9}(B)$, we obtain   
$$B_{-10}\dots B_9 = 2_2 1 2_2 1_2 2_3 2^* 1 2_2 1_2 2_4$$ 

In view of our assumption on $\lambda_0(B)$, by successively applying Lemma \ref{l.F1} (8), (9), we get  
$$B_{-11}\dots B_{10} = 2_3 1 2_2 1_2 2_3 2^* 1 2_2 1_2 2_4 1$$

In view of our assumption on $\lambda_{\pm9}(B)$, by successively applying Lemma \ref{l.F1} (2), (4), we get  
$$B_{-13}\dots B_{10} = 1 2_4 1 2_2 1_2 2_3 2^* 1 2_2 1_2 2_4 1$$

By Lemma \ref{l.F1} (10) and our assumption on $\lambda_{0}(B)$, we obtain   
$$B_{-13}\dots B_{11} = 1 2_4 1 2_2 1_2 2_3 2^* 1 2_2 1_2 2_4 1 2$$ 

In view of our assumption on $\lambda_{\pm11}(B)$, by successively applying Lemma \ref{l.F1} (1), (3), we get  
$$B_{-13}\dots B_{13} = 1 2_4 1 2_2 1_2 2_3 2^* 1 2_2 1_2 2_4 1 2_2 1$$

In view of our assumption on $\lambda_{0}(B)$, by successively applying Lemma \ref{l.F1} (11), (12), we deduce that   
$$B_{-13}\dots B_{15} = 1 2_4 1 2_2 1_2 2_3 2^* 1 2_2 1_2 2_4 1 2_2 1_2 2$$

By Lemma \ref{l.F1} (1) and our assumption on $\lambda_{\pm15}(B)$, we obtain 
$$B_{-13}\dots B_{16} = 1 2_4 1 2_2 1_2 2_3 2^* 1 2_2 1_2 2_4 1 2_2 1_2 2_2$$ 

Finally, by Lemma \ref{l.F1} (13) and our assumption on $\lambda_{0}(B)$, we conclude that 
$$B_{-14}\dots B_{16} = 1_2 2_4 1 2_2 1_2 2_3 2^* 1 2_2 1_2 2_4 1 2_2 1_2 2_2$$
\end{proof}

A careful inspection of the proof of the previous lemma reveals that the following statement holds:

\begin{lemma}\label{l.F3'} Let $B\in\{1,2\}^{\mathbb{Z}}$ such that $B_{-6}\dots B_8 = 2 1_2 2_4 1 2_2 1_2 2_3$ and $\lambda_n(B)<3.1181201786$ for $n\in\{0, -6, -9,11,15\}$. Then, $B_{-14}\dots B_{16}$ equals  
$$1_2 2_4 1 2_2 1_2 2_4 1 2_2 1_2 2_4 1 2_2 1_2 2_2$$

In particular: 
\begin{itemize}
\item either $B_{-15}\dots B_{16} = 1_3 2_4 1 2_2 1_2 2_4 1 2_2 1_2 2_4 1 2_2 1_2 2_2$,
\item or $B_{-15}\dots B_{16} = 2 1_2 2_4 1 2_2 1_2 2_4 1 2_2 1_2 2_4 1 2_2 1_2 2_2$ and the vicinity of $B_{-9}$ is $B_{-15}\dots B_{-1} = 2 1_2 2_4 1 2_2 1_2 2_3$.
\end{itemize} 
\end{lemma}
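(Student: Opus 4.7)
\medskip

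\noindent\textbf{Plan.} The starting observation will be that the hypothesis $B_{-6}\ldots B_8 = 2\,1_2\,2_4\,1\,2_2\,1_2\,2_3$ coincides precisely with the configuration reached just after the application of Lemma \ref{l.F2}(19)--(21) in the middle of the proof of Lemma \ref{l.F4}. My plan is therefore to skip the opening stretch of that argument and resume the chain of forbidden-string deductions from that point onwards, verifying at each step that the Lagrange hypothesis invoked lies in the asymmetric set $\{0,-6,-9,11,15\}$ available here, rather than in the larger symmetric set $\{0,\pm 2,\pm 6,\pm 9,\pm 11,\pm 15\}$ used in Lemma \ref{l.F4}.

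The remaining deductions split cleanly into an independent leftward chain determining $B_{-7},\ldots,B_{-14}$ and a rightward chain determining $B_9,\ldots,B_{16}$. On the right I will re-use verbatim the extensions from Lemma \ref{l.F4} that are driven by $\lambda_0$ (via Lemma \ref{l.F1}(5)--(13)), by $\lambda_{11}$ (via Lemma \ref{l.F1}(1),(3)), and by $\lambda_{15}$ (via Lemma \ref{l.F1}(1)); none of these ever invokes a $\lambda_n$ with negative index. On the left I will first force $B_{-7}=2$ through Lemma \ref{l.F1}(1) applied at position $-6$, invoking $\lambda_{-6}$ to rule out the pattern $121$, and then extend down to $B_{-14}$ using only the leftward deductions, which rely on $\lambda_0$ and $\lambda_{-9}$. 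The ``missing'' hypotheses $\lambda_{+6}$, $\lambda_{+9}$, $\lambda_{-11}$, $\lambda_{-15}$ of Lemma \ref{l.F4} will never be needed: the positions they would govern either sit inside the given block $B_{-6}\ldots B_8$ or are reached by the opposite chain.

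The ``in particular'' dichotomy then follows immediately: $B_{-15}\in\{1,2\}$, and each choice concatenates with the already-determined $B_{-14}\ldots B_{-1}=1_2\,2_4\,1\,2_2\,1_2\,2_3$ to give exactly the two possibilities in the statement, the second of them being the hypothesis pattern re-centered at position $-9$. The main obstacle throughout is bookkeeping rather than mathematics: for each remaining step of Lemma \ref{l.F4}'s proof one has to check that the relevant forbidden-string rule is genuinely applicable at the desired position without requiring the unavailable side of the symmetric Lagrange hypothesis. This inspection is mechanical once one notices that the leftward and rightward extensions do not interact.
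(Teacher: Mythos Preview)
Your proposal is correct and matches the paper's approach exactly: the paper's entire ``proof'' of this lemma is the single remark that a careful inspection of the proof of Lemma~\ref{l.F4} reveals the statement, and your plan is precisely that inspection, tracking which $\lambda_n$ are actually invoked after the midpoint $B_{-6}\dots B_8 = 2\,1_2\,2_4\,1\,2_2\,1_2\,2_3$.

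One small caution on your organizational claim: the leftward and rightward extensions are \emph{not} fully independent --- for instance, forcing $B_9=2$ via Lemma~\ref{l.F1}(6) at position $0$ already requires $B_{-7}=2$ (obtained from $\lambda_{-6}$), and forcing $B_{-11}=2$ via Lemma~\ref{l.F1}(9) at position $0$ requires $B_{10}=1$ (obtained from the rightward step using (8)). So you cannot literally run two separate passes; you must interleave the deductions in the order of Lemma~\ref{l.F4}'s proof. This does not affect the substance, however: each individual step still invokes only $\lambda_n$ with $n\in\{0,-6,-9,11,15\}$, which is the real content of the check.
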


Using these facts, we can produce another forbidden string for sequences with Markov values in the interval $(c_{\infty}, C_{\infty})$:

\begin{lemma}\label{l.14} Let $A\in\{1,2\}^{\mathbb{Z}}$ such that, for some $n\in\mathbb{Z}$ and $a\in\mathbb{N}$, one has $\lambda_k(A) < 3.1181201786$ for $k-n\in\{-21-6j, -19-6j: j=0,\dots, a\}$, $k=9, 27, 33, 35, 37$, and $k-n\in\{39+6j, 41+6j, 43+6j: j=0,\dots, a\}$.

If $A_{n-15}\dots A_{n+16} = 1_3 2_4 1 2_2 1_2 2_4 1 2_2 1_2 2_4 1 2_2 1_2 2_2$, then
\begin{eqnarray*}
\lambda_n(A)&\geq& [2; 1, 2_2, 1_2, 2_4, 1, 2_2, 1_2, 2_4, 1, 2_2, 1_2, 2_2, 1, 2_4, 1_2, \underbrace{2_3, 1_3, \dots, 2_3, 1_3}_{a+2 \textrm{ times }}, \dots] \\
&+& [0; 2_3, 1_2, 2_2, 1, 2_4, 1,\underbrace{1_3, 2_3, \dots, 1_3, 2_3}_{a+1 \textrm{ times }},\dots]
\end{eqnarray*}

In particular, the subsequence $1_3 2_4 1 2_2 1_2 2_4 1 2_2 1_2 2_4 1 2_2 1_2 2_2$ or its transpose is not contained in a bi-infinite sequence $A\in\{1,2\}^{\mathbb{Z}}$ with $m(A) < C_{\infty}$. 
\end{lemma}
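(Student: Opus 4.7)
The plan is to run an induction on $a$, at each step extending the forced portion of $A$ by one further period block via a shifted application of Lemma \ref{l.F3'}. The given 32-digit word $A_{n-15}\dots A_{n+16}$ is exactly the first alternative produced by Lemma \ref{l.F3'} at center $n$, and the hypotheses on $\lambda_k(A)$ listed in the statement are arranged so that at each subsequent shift by $6$ (toward either end) one gets another instance of the same configuration. Specifically, the pairs $k-n \in \{-21-6j,\,-19-6j\}$ (for $j=0,\ldots,a$) drive the leftward extensions, the triples $k-n \in \{39+6j,\,41+6j,\,43+6j\}$ (for $j=0,\ldots,a$) drive the rightward extensions, and the five extra indices $k-n \in \{9,27,33,35,37\}$ handle the asymmetric ``startup'' on the right where the initial word ends in $2_2$ rather than $2_4$.

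For a single inductive step I would proceed as follows. First, use a handful of the listed $\lambda_k$-conditions together with the $12^*1$-type forbidden strings of Lemma \ref{l.F1} (items (1)--(4) especially, propagating across a $2_4 1 2_2 1_2$ chunk) to re-install the 15-digit subword $2 1_2 2_4 1 2_2 1_2 2_3$ at the newly shifted center; then invoke Lemma \ref{l.F3'} there, which pins down $A_k$ on a window $8$ positions wider on each side, of which $6$ are genuinely new and constitute exactly the next period block $2_3,1_3$ (respectively $1_3,2_3$ on the left). After $a+1$ such iterations on each side, the forced digits of $A$ determine both continued fractions appearing in $\lambda_n(A) = [A_n; A_{n+1},\ldots] + [0; A_{n-1},A_{n-2},\ldots]$ up to the length needed for the statement, and the claimed lower bound then follows from the standard parity-of-depth comparison rule for $\{1,2\}$-valued continued fractions.

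For the ``in particular'' statement, assume $A$ contains the word $1_3 2_4 1 2_2 1_2 2_4 1 2_2 1_2 2_4 1 2_2 1_2 2_2$ (the transpose case reduces to this by replacing $A$ with the reversed sequence $\widetilde A$, using the identity $\lambda_i(A) = \lambda_{-i}(\widetilde A)$ and hence $m(\widetilde A) = m(A)$) and $m(A) < C_\infty$. A direct comparison of decimal expansions gives $C_\infty = 3.1181201783\ldots < 3.1181201786$, so $\lambda_k(A) \leq m(A) < 3.1181201786$ for every $k \in \mathbb{Z}$, and the first part of the lemma applies for every $a \in \mathbb{N}$. Letting $a \to \infty$, the finite repetitions of $2_3,1_3$ and $1_3,2_3$ become the periodic tails $\overline{2_3,1_3}$ and $\overline{1_3,2_3}$, so the lower bound converges to $C_\infty$. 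Hence $m(A) \geq \lambda_n(A) \geq C_\infty$, contradicting $m(A) < C_\infty$.

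The principal technical obstacle is bookkeeping: one must verify that at each of the $a+1$ shifted steps on either side the particular $\lambda_k$-hypotheses listed are exactly those needed to carry out one application of Lemma \ref{l.F3'} together with the relevant Lemma \ref{l.F1} forbidden strings --- neither more nor less. I expect this to be a tedious but routine case check, essentially a shifted replay of the proof of Lemma \ref{l.F4}.
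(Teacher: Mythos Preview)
Your overall architecture --- induction on $a$, extend the forced window by one period block on each side per step, then let $a\to\infty$ for the ``in particular'' clause --- matches the paper, and your treatment of the ``in particular'' part is essentially identical to the paper's. The gap is in the mechanism you propose for the inductive step.

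You plan to iterate Lemma~\ref{l.F3'} at centers shifted by $6$. This cannot work: the hypothesis window of Lemma~\ref{l.F3'} is the $15$-digit string $2\,1_2\,2_4\,1\,2_2\,1_2\,2_3$, which lives in the period-$9$ regime $1\,2_2\,1_2\,2_4$, and the lemma's natural shift (as used e.g.\ in Proposition~\ref{p.Cantors-covering-M-L-piece}) is $9$, not $6$. Once the left tail has been forced into the $(1_3\,2_3)$-periodic pattern (and the right tail into the $(2_3\,1_3)$-periodic pattern), the $15$-digit window $2\,1_2\,2_4\,1\,2_2\,1_2\,2_3$ simply does not occur at any shifted center, so there is nothing for Lemma~\ref{l.F3'} to grab onto. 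Also, Lemma~\ref{l.F3'} requires \emph{five} $\lambda$-conditions per application, whereas you only have two (left) or three (right) per step.

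What the paper actually does for the recursion is much more elementary: it uses only Lemma~\ref{l.F1}(1) and~(2) --- the short forbidden strings $1\,2^*\,1$ and $2_2\,1\,2^*\,2_2\,1$ --- directly at the two positions $n-19-6j,\,n-21-6j$ (left) or the three positions $n+39+6j,\,n+41+6j,\,n+43+6j$ (right) to force the next $1_3\,2_3$ (resp.\ $2_3\,1_3$) block, via the parity comparison for continued fractions. Lemma~\ref{l.F3'} is invoked exactly \emph{once}, for the index $k-n=9$, as a one-off ``startup'' on the right to pass from $\dots 2_2$ to $\dots 2_4\,1\,2_2\,1_2\,2_2$ before the $(2_3\,1_3)$-pattern begins; the remaining startup indices $27,33,35,37$ are handled by Lemma~\ref{l.F1}(1),(2),(4). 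So the roles you assign to \ref{l.F1} and \ref{l.F3'} are essentially reversed: \ref{l.F1} drives the induction, \ref{l.F3'} is a one-time device.
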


\begin{proof} If $A_{n-15}\dots A_{n+16} = 1_3 2_4 1 2_2 1_2 2_4 1 2_2 1_2 2_4 1 2_2 1_2 2_2$, then 
\begin{eqnarray*}
\lambda_n(A) &=& [0; 2_3, 1_2, 2_2, 1, 2_4, 1_3,\dots] + [2; 1, 2_2, 1_2, 2_4, 1, 2_2, 1_2, 2_2,\dots] \\ 
&\geq& [0; 2_3, 1_2, 2_2, 1, 2_4, 1_4, 2, \dots] + [2; 1, 2_2, 1_2, 2_4, 1, 2_2, 1_2, 2_2,\dots]
\end{eqnarray*}

By Lemma \ref{l.F1} (1) and our assumption on $\lambda_{n-17}(A)$, we have 
\begin{eqnarray*}
\lambda_n(A) &\geq& [0; 2_3, 1_2, 2_2, 1, 2_4, 1_4, 2, \dots] + [2; 1, 2_2, 1_2, 2_4, 1, 2_2, 1_2, 2_3,\dots] \\ 
&\geq& [0; 2_3, 1_2, 2_2, 1, 2_4, 1_4, 2_2, \dots] + [2; 1, 2_2, 1_2, 2_4, 1, 2_2, 1_2, 2_3,\dots] \\ 
&\geq& [0; 2_3, 1_2, 2_2, 1, 2_4, 1_4, 2_3, 1, \dots] + [2; 1, 2_2, 1_2, 2_4, 1, 2_2, 1_2, 2_3,\dots]
\end{eqnarray*}

By Lemma \ref{l.F1} (1), (2) and our assumption on $\lambda_{n-21}(A)$, $\lambda_{n-19}(A)$, we get  
\begin{eqnarray*}
\lambda_n(A) &\geq& [0; 2_3, 1_2, 2_2, 1, 2_4, 1_4, 2_3, 1, \dots] + [2; 1, 2_2, 1_2, 2_4, 1, 2_2, 1_2, 2_3,\dots] \\ 
&\geq& [0; 2_3, 1_2, 2_2, 1, 2_4, 1_4, 2_3, 1_2, \dots] + [2; 1, 2_2, 1_2, 2_4, 1, 2_2, 1_2, 2_3,\dots] \\ 
&\geq& [0; 2_3, 1_2, 2_2, 1, 2_4, 1_4, 2_3, 1_3, 2, \dots] + [2; 1, 2_2, 1_2, 2_4, 1, 2_2, 1_2, 2_3,\dots]
\end{eqnarray*}

By Lemma \ref{l.F1} (1) and our assumption on $\lambda_{n-23}(A)$, we obtain 
\begin{eqnarray*}
\lambda_n(A) &\geq& [0; 2_3, 1_2, 2_2, 1, 2_4, 1_4, 2_3, 1_3, 2, \dots] + [2; 1, 2_2, 1_2, 2_4, 1, 2_2, 1_2, 2_3,\dots] \\ 
&\geq& [0; 2_3, 1_2, 2_2, 1, 2_4, 1_4, 2_3, 1_3, 2_2, \dots] + [2; 1, 2_2, 1_2, 2_4, 1, 2_2, 1_2, 2_3,\dots] \\ 
&\geq& [0; 2_3, 1_2, 2_2, 1, 2_4, 1_4, 2_3, 1_3, 2_3, \dots] + [2; 1, 2_2, 1_2, 2_4, 1, 2_2, 1_2, 2_3,\dots]
\end{eqnarray*}

By recursively applying the previous arguments at the positions $20+6j$, $1\leq j\leq a$, we see that our assumptions on $\lambda_{n-19-6j}(A)$ and $\lambda_{n-21-6j}(A)$ for $1\leq j\leq a$ together with Lemma \ref{l.F1} (1), (2) imply that 
\begin{eqnarray*}
\lambda_n(A)&\geq& [0; 2_3, 1_2, 2_2, 1, 2_4, 1,\underbrace{1_3, 2_3, \dots, 1_3, 2_3}_{a+1 \textrm{ times }},\dots] \\
&+& [2; 1, 2_2, 1_2, 2_4, 1, 2_2, 1_2, 2_3,\dots]
\end{eqnarray*}

On the other hand, the fact that $A_{n}\dots A_{n} = 2 1_2 2_4 1 2_2 1_2 2_3$ and our assumption on $\lambda_9(A)$ allow to apply Lemma \ref{l.F3'} to get  
\begin{eqnarray*}
\lambda_n(A)&\geq& [0; 2_3, 1_2, 2_2, 1, 2_4, 1,\underbrace{1_3, 2_3, \dots, 1_3, 2_3}_{a+1 \textrm{ times }},\dots] \\
&+& [2; 1, 2_2, 1_2, 2_4, 1, 2_2, 1_2, 2_4, 1, 2_2, 1_2, 2_2, \dots]
\end{eqnarray*}

By Lemma \ref{l.F1} (1), (2), (4) and our assumption on $\lambda_{27}(A)$, we have 
\begin{eqnarray*}
\lambda_n(A)&\geq& [0; 2_3, 1_2, 2_2, 1, 2_4, 1,\underbrace{1_3, 2_3, \dots, 1_3, 2_3}_{a+1 \textrm{ times }},\dots] \\
&+& [2; 1, 2_2, 1_2, 2_4, 1, 2_2, 1_2, 2_4, 1, 2_2, 1_2, 2_2, 1, 2_4, 1_2, 2, \dots]
\end{eqnarray*}

By Lemma \ref{l.F1} (1) and our assumption on $\lambda_{33}(A)$, we have 
\begin{eqnarray*}
\lambda_n(A)&\geq& [0; 2_3, 1_2, 2_2, 1, 2_4, 1,\underbrace{1_3, 2_3, \dots, 1_3, 2_3}_{a+1 \textrm{ times }},\dots] \\
&+& [2; 1, 2_2, 1_2, 2_4, 1, 2_2, 1_2, 2_4, 1, 2_2, 1_2, 2_2, 1, 2_4, 1_2, 2_3, 1, \dots]
\end{eqnarray*}

By Lemma \ref{l.F1} (1) and our assumption on $\lambda_{37}(A)$, and by Lemma \ref{l.F1} (2) and our assumption on $\lambda_{35}(A)$, we have 
\begin{eqnarray*}
\lambda_n(A)&\geq& [0; 2_3, 1_2, 2_2, 1, 2_4, 1,\underbrace{1_3, 2_3, \dots, 1_3, 2_3}_{a+1 \textrm{ times }},\dots] \\
&+& [2; 1, 2_2, 1_2, 2_4, 1, 2_2, 1_2, 2_4, 1, 2_2, 1_2, 2_2, 1, 2_4, 1_2, 2_3, 1_3, \dots]
\end{eqnarray*}

By recursively applying Lemma \ref{l.F1} (1) at the positions $39+6j$, $39+4(j+1)$, $0\leq j\leq a$, and Lemma \ref{l.F1} (2) at the positions $39+2(j+1)$, $0\leq j\leq a$, we see that our assumptions on $\lambda_{n+39+6j}(A)$, $\lambda_{n+43+6j}(A)$ and $\lambda_{n+41+6j}(A)$ for $0\leq j\leq a$ imply that 

\begin{eqnarray*}
\lambda_n(A)&\geq& [0; 2_3, 1_2, 2_2, 1, 2_4, 1,\underbrace{1_3, 2_3, \dots, 1_3, 2_3}_{a+1 \textrm{ times }},\dots] \\
&+& [2; 1, 2_2, 1_2, 2_4, 1, 2_2, 1_2, 2_4, 1, 2_2, 1_2, 2_2, 1, 2_4, 1_2, 2_3, 1_3, \underbrace{2_3, 1_3, \dots, 2_3, 1_3}_{a+1 \textrm{ times }}, \dots]
\end{eqnarray*}

Finally, assume that $A\in\{1,2\}^{\mathbb{Z}}$ is a bi-infinite sequence with $m(A)<C_{\infty}$ containing $1_3 2_4 1 2_2 1_2 2_4 1 2_2 1_2 2_4 1 2_2 1_2 2_2$ or its transpose, say $A_{l-15}\dots A_{l+16}$ or $(A_{l-16}\dots A_{l+15})^T$ equals $1_3 2_4 1 2_2 1_2 2_4 1 2_2 1_2 2_4 1 2_2 1_2 2_2$ for some $l\in\mathbb{Z}$. Our discussion above would then imply that
\begin{eqnarray*}
C_{\infty} &>& m(A) \geq \lambda_l(A) \\ &\geq& [2; 1, 2_2, 1_2, 2_4, 1, 2_2, 1_2, 2_4, 1, 2_2, 1_2, 2_2, 1, 2_4, 1_2, \overline{2_3, 1_3}]
+ [0; 2_3, 1_2, 2_2, 1, 2_4, 1,\overline{1_3, 2_3}] \\
&:=& C_{\infty},
\end{eqnarray*}
a contradiction. This proves the lemma.
\end{proof}

At this point, we are ready to characterize the sequences in $\{1,2\}^{\mathbb{Z}}$ giving rise to a Markov value in $(c_{\infty}, C_{\infty})$: 

\begin{proposition}\label{p.Cantors-covering-M-L-piece} Let $m\in M\cap (c_{\infty}, C_{\infty})$. Then, $m=m(B)=\lambda_0(B)$ for a sequence $B\in\{1,2\}^{\mathbb{Z}}$ with the following properties:
\begin{itemize}
\item $\dots B_{-14}\dots B_0 B_1\dots B_{16} = \overline{1 2_2 1_2 2_4}1 2_2 1_2 2_4 1 2_2 1_2 2_2$;
\item there exists $N\geq 17$ such that $B_{N} B_{N+1}\dots$ is a word on $1$ and $2$ satisfying:
\begin{itemize}
\item it does not contain the subwords (1) to (13) and their transposes, and $2 1_2 2_4 1 2_2 1_2 2_3$,
\item if it contains the subword $2_3 1_2 2_2 1 2_4 1_2 2 = B_{n-8}\dots B_{n+6}$, then
$$B_{n}\dots B_{n+9} \dots = \overline{2_4 1 2_2 1_2}$$
\end{itemize}
\end{itemize}
\end{proposition}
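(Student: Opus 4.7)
The plan is to realize $m$ as $\lambda_0(B)$ for a bi-infinite $B\in\{1,2\}^{\mathbb{Z}}$ and then extract the forced combinatorial structure. As a preliminary step, any $A\in(\mathbb{N}^*)^{\mathbb{Z}}$ with $m(A)=m<C_\infty<3.12$ must have digits in $\{1,2\}$ only: a digit $\geq 4$ immediately gives $\lambda_j\geq 4$, while $a_j=3$ (necessarily flanked by digits $\leq 3$, else $\lambda_{j\pm 1}\geq 4$) yields $\lambda_j\geq 3+1/[3;\overline{3}]>3.3$. A standard compactness argument --- take shifts of a realizing $A$ with $\lambda_{j_k}(A)\to m$, pass to a convergent subsequence in the compact space $\{1,2\}^{\mathbb{Z}}$, and use continuity of each $\lambda_n$ --- then produces $B\in\{1,2\}^{\mathbb{Z}}$ with $\lambda_0(B)=m$ and $\lambda_n(B)\leq m$ for every $n$.

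Because $c_\infty>3.118117$ and $C_\infty<3.1181201786$, the hypotheses of Lemma~\ref{l.F4} are met by $B$. After possibly reversing $B$ (which preserves $m(B)$) we obtain $B_{-14}\dots B_{16}=1_2 2_4 1 2_2 1_2 2_4 1 2_2 1_2 2_4 1 2_2 1_2 2_2$, and in particular $B_{-6}\dots B_8 = 2\,1_2 2_4 1 2_2 1_2 2_3$. Now Lemma~\ref{l.F3'} applies at position~$0$; its first alternative is ruled out by Lemma~\ref{l.14} (which uses $m(B)<C_\infty$), so its second alternative gives $B_{-15}\dots B_{-1} = 2\,1_2 2_4 1 2_2 1_2 2_3$. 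This is precisely the hypothesis of Lemma~\ref{l.F3'} re-applied nine positions further to the left, so the bootstrap iterates and the periodic pattern $\overline{1\,2_2 1_2 2_4}$ extends indefinitely to the left, establishing the first condition in the proposition.

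For the right half I would define $N\geq 17$ as the smallest integer for which $B_N B_{N+1}\cdots$ contains no occurrence of $2\,1_2 2_4 1 2_2 1_2 2_3$. Finite $N$ exists, because each occurrence of this $15$-letter word combines with Lemma~\ref{l.F3'}/Lemma~\ref{l.14} to force the surrounding $31$-letter block together with a further occurrence nine steps to the right; an infinite chain of such occurrences would force $B$ to be purely periodic, which gives $m(B)=m(\overline{1\,2_2 1_2 2_4})=c_\infty$ and contradicts $m>c_\infty$. The absence of the subwords (1)--(13) and their transposes in $B_N B_{N+1}\cdots$ is automatic, since any such occurrence in $B$ would produce some $\lambda_j(B)>C_\infty\geq m$. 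The second sub-condition follows by the symmetric argument: if the transpose word $2_3 1_2 2_2 1 2_4 1_2 2 = B_{n-8}\dots B_{n+6}$ appears in the tail, then applying Lemma~\ref{l.F3'} and Lemma~\ref{l.14} to the reversed reading of $B$ forces $B_{n+1}\dots B_{n+15}$ to equal the same transpose word, and iterating produces a periodic right tail from position~$n$.

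The main obstacle is the right-tail analysis: one must organize the bootstrap so that $N$ is finite (which relies on the identification $m(\overline{1\,2_2 1_2 2_4})=c_\infty$ to forbid an infinite chain) and separate the two admissible behaviors --- genuinely free tail versus eventually periodic tail --- encoded in the two sub-conditions. The forward/backward symmetry used for the transpose condition should be spelled out, since the lemmas are stated only in one orientation.
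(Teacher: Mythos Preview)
Your argument is essentially the paper's own proof: realize $m=\lambda_0(B)=m(B)$ for some $B\in\{1,2\}^{\mathbb{Z}}$, apply Lemma~\ref{l.F4} to fix the central block, iterate Lemma~\ref{l.F3'} (with its first alternative excluded by Lemma~\ref{l.14}) to obtain the periodic left tail, and then use the contradiction $m(\overline{1\,2_2 1_2 2_4})=c_\infty$ to produce $N$ on the right. One small slip worth correcting: the ``further occurrence nine steps to the right'' forced by Lemma~\ref{l.F3'} is in fact nine steps to the \emph{left}; this does not damage your existence argument for $N$, since what you really need is that each occurrence at a far-right position $k$ determines the periodic block out to $k+16$, so occurrences at arbitrarily large positions force $B$ to be fully periodic --- but the direction of the bootstrap should be stated correctly.
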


\begin{proof} Let $B\in\{1,2\}^{\mathbb{Z}}$ be a bi-infinite sequence such that $\lambda_0(B) = m(B) = m$. Since $3.118117 < c_{\infty} < m < C_{\infty} < 3.1181201786$, Lemma \ref{l.F4} says that
$B_{-14}\dots B_{16}$ or $(B_{-16}\dots B_{14})^T$ equals to $1_2 2_4 1 2_2 1_2 2_4 1 2_2 1_2 2_4 1 2_2 1_2 2_2$. 

Thus, by reversing $B$ if necessary, we obtain a bi-infinite sequence $B\in\{1,2\}^{\mathbb{Z}}$ such that $m=m(B)=\lambda_0(B)$ and $B_{-14}\dots B_{16} = 1_2 2_4 1 2_2 1_2 2_4 1 2_2 1_2 2_4 1 2_2 1_2 2_2$.

Because $m(B)=m<C_{\infty}$, we know from Lemma \ref{l.14} that $B$ does not contain the word $1_3 2_4 1 2_2 1_2 2_4 1 2_2 1_2 2_4 1 2_2 1_2 2_2$, and, thus, we can successively apply Lemma \ref{l.F3'} at the positions $-9k$, $k\in\mathbb{N}$, to get that
$$\dots B_{-14}\dots B_0 B_1\dots B_{16} = \overline{1 2_2 1_2 2_4}1 2_2 1_2 2_4 1 2_2 1_2 2_2$$
Moreover, Lemma \ref{l.F1} implies that the word $B_{17}\dots$ does not contain the subwords (1) to (13) and their transposes.

Furthermore, the subword $2 1_2 2_4 1 2_2 1_2 2_3$ can not appear in $B_n\dots$ for all $n\geq 17$. Indeed, if this happens, since $m(B)=m<C_{\infty}$, it would follow from Lemma \ref{l.14} that $B$ does not contain the subsequence $1_3 2_4 1 2_2 1_2 2_4 1 2_2 1_2 2_4 1 2_2 1_2 2_2$ and, hence, one could repeatedly apply Lemma \ref{l.F3'} to deduce that $B=\overline{1 2_2 1_2 2_4}$, a contradiction because this would mean that $c_{\infty}<m=m(B)=m(\overline{1 2_2 1_2 2_4})=c_{\infty}$.

In summary, we showed that there exists $N\geq 17$ such that the word $B_{N}\dots$ does not contain the subwords (1) to (13) and their transposes, and $2 1_2 2_4 1 2_2 1_2 2_3$.

Finally, if the word $B_{17}\dots$ contains the subword $2_3 1_2 2_2 1 2_4 1_2 2 = B_{n-8}\dots B_{n+6}$, since $B$ does not contain the transpose of $1_3 2_4 1 2_2 1_2 2_4 1 2_2 1_2 2_4 1 2_2 1_2 2_2$ (thanks to Lemma \ref{l.14} and the fact that $m(B)=m<C_{\infty}$), then one can apply Lemma \ref{l.F3'} at the positions $n+9k$ for all $k\in\mathbb{N}$ to get that
$$B_{n}\dots B_{n+9} \dots = \overline{2_4 1 2_2 1_2}$$

This completes the argument. 
\end{proof}

\begin{remark} We use Proposition \ref{p.Cantors-covering-M-L-piece} to detect new numbers in $M\setminus L$: see Section \ref{a.new-numbers}.
\end{remark}

A variant of the argument used in the proof of Proposition \ref{p.Cantors-covering-M-L-piece} yields the  following result (essentially due to Berstein \cite{Be73}): 

\begin{proposition}\label{p.Berstein1} $L\cap (c_{\infty}, C_{\infty}) = \emptyset$.
\end{proposition}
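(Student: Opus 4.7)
The plan is to argue by contradiction, mirroring the proof of Proposition~\ref{p.Cantors-covering-M-L-piece} but working under the one-sided control provided by $\ell(A)=\limsup_{n\to\infty}\lambda_n(A)$. Assume $\ell\in L\cap(c_\infty,C_\infty)$ and write $\ell=\ell(A)$ for some $A\in\{1,2\}^{\mathbb{Z}}$ (the restriction to the alphabet $\{1,2\}$ is standard at this height of the spectrum). Choose $\varepsilon>0$ with $c_\infty<\ell-\varepsilon$ and $\ell+\varepsilon<C_\infty<3.1181201786$. There is then some $N_0$ with $\lambda_n(A)<\ell+\varepsilon$ for every $n\geq N_0$, while there are infinitely many $n_k\to\infty$ with $\lambda_{n_k}(A)>\ell-\varepsilon>3.118117$.

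For each sufficiently large $n_k$, all positions $n_k+j$ with $|j|\leq 15$ lie in $[N_0,\infty)$, so Lemma~\ref{l.F4} applies at $n_k$ and yields one of two alternatives: either $A_{n_k-14}\ldots A_{n_k+16}$ or $(A_{n_k-16}\ldots A_{n_k+14})^T$ equals the 31-letter word $w:=1_2 2_4 1 2_2 1_2 2_4 1 2_2 1_2 2_4 1 2_2 1_2 2_2$. After passing to a subsequence I may assume the same alternative holds for every $n_k$; call these cases (a) and (b). In each case the goal is to propagate $w$ (respectively $w^T$) indefinitely in one direction by iterating Lemma~\ref{l.F3'} (respectively its transposed analogue) at the centers $n_k\mp 9j$, $j=1,2,\ldots$.

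In case (a), the only obstruction to iterating leftward is alternative (i) of Lemma~\ref{l.F3'}, which would place the 32-letter word $1_3 2_4 1 2_2 1_2 2_4 1 2_2 1_2 2_4 1 2_2 1_2 2_2$ at some position $n_k-9j$. But if $n_k-9j$ is sufficiently large (depending only on $N_0$, $\ell$ and $\varepsilon$), I can invoke Lemma~\ref{l.14} with a parameter $a$ chosen so that all its $\lambda$-bounds involve positions in $[N_0,\infty)$; the lemma then forces $\lambda_{n_k-9j}(A)$ above a quantity which can be made arbitrarily close to $C_\infty$, eventually exceeding the tail bound $\ell+\varepsilon$, a contradiction. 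Hence the iteration reaches back to a fixed threshold $N_1=N_1(N_0,\ell,\varepsilon)$, showing that $A$ agrees with $\overline{1 2_2 1_2 2_4}$ on $[N_1,n_k+16]$. Letting $k\to\infty$ gives $A=\overline{1 2_2 1_2 2_4}$ on $[N_1,\infty)$, so $\lambda_n(A)\to c_\infty$ as $n\to\infty$, whence $\ell(A)=c_\infty$ and a contradiction. Case (b) is entirely symmetric: iterating the transposed versions of Lemmas~\ref{l.F3'} and~\ref{l.14} extends the window $w^T$ rightward from a single large $n_k$ out to infinity (every relevant position automatically sits in $[N_0,\infty)$), so $A=\overline{1 2_2 1_2 2_4}$ on $[n_k-16,\infty)$ and again $\ell(A)=c_\infty$.

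The main technical point is that Lemmas~\ref{l.F3'} and~\ref{l.14} were formulated under the two-sided Markov hypothesis $m(A)<C_\infty$, whereas here I only have $\lambda_n(A)<C_\infty$ for $n\geq N_0$. The bookkeeping must therefore verify that every $\lambda$-value inspected in the course of the iteration indeed sits in the controlled half-line; this is precisely why the iteration in case (a) can only reach back to a threshold $N_1$ depending on $N_0$, and why a union over $n_k\to\infty$ is needed to recover full periodicity on a right half-line. Case (b) is easier because the iteration naturally proceeds to the right of $n_k$ and so never leaves $[N_0,\infty)$.
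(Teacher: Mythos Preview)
Your proposal is correct and follows essentially the same strategy as the paper: apply Lemma~\ref{l.F4} at the positions $n_k$, use Lemma~\ref{l.14} to exclude the forbidden $32$-letter word, and iterate Lemma~\ref{l.F3'} (or its transpose) to force the tail of $A$ to coincide with $\overline{1\,2_2\,1_2\,2_4}$, whence $\ell(A)=c_\infty$. The only organizational differences are that the paper first rules out the forbidden word once and for all on some half-line $[R,\infty)$ before iterating, and splits the dichotomy as ``alternative (a) for all $n_k$'' versus ``alternative (b) for some $n_{k_0}$'' rather than passing to a subsequence; note also that your phrase ``$\lambda_n(A)\to c_\infty$'' should read ``$\limsup_n\lambda_n(A)=c_\infty$''.
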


\begin{proof} Suppose that $\alpha\in L\cap (c_{\infty}, C_{\infty})$ and fix $B\in\{1,2\}^{\mathbb{Z}}$ with $\ell(B) = \alpha$. Since $3.118117 < c_{\infty} < \alpha < C_{\infty} < 3.1181201786$, we can choose $N\in\mathbb{N}$ such that 
$$\lambda_n(B)< 3.1181201786$$
for all $|n|\geq N$, and we can fix a monotone sequence $\{n_k\}_{k\in\mathbb{N}}$ such that $|n_k|\geq N$ and $\lambda_{n_k}(B) > 3.118117$ for all $k\in\mathbb{Z}$. Moreover, by reversing $B$ if necessary, we can assume that $n_k\to+\infty$ as $k\to\infty$ and $\limsup\limits_{n\to+\infty}\lambda_n(B) = \alpha$.

We affirm that $1_3 2_4 1 2_2 1_2 2_4 1 2_2 1_2 2_4 1 2_2 1_2 2_2$ or its transpose can not be contained in $B_n B_{n+1}\dots$ for all $n\geq N$: otherwise, we would have a sequence  $m_k\to+\infty$ as $k\to\infty$ such that $B_{m_k-15}\dots B_{m_k+16}$ or $(B_{m_k-16}\dots B_{m_k+15})^T$ equals $1_3 2_4 1 2_2 1_2 2_4 1 2_2 1_2 2_4 1 2_2 1_2 2_2$  for all $k\in\mathbb{N}$; hence, by Lemma \ref{l.14}, the fact that $\lambda_n(B)<3.1181201786$ for all $n\geq N$ would imply that
\begin{eqnarray*}
\lambda_{m_k}(B)&\geq& [2; 1, 2_2, 1_2, 2_4, 1, 2_2, 1_2, 2_4, 1, 2_2, 1_2, 2_2, 1, 2_4, 1_2, \underbrace{2_3, 1_3, \dots, 2_3, 1_3}_{a_k+2 \textrm{ times }}, \dots] \\
&+& [0; 2_3, 1_2, 2_2, 1, 2_4, 1,\underbrace{1_3, 2_3, \dots, 1_3, 2_3}_{a_k+1 \textrm{ times }},\dots]
\end{eqnarray*}
where $a_k=\lfloor\frac{m_k-43-N}{6}\rfloor$; on the other hand, since $a_k\to\infty$ as $k\to\infty$, it would follow that $C_{\infty} > \alpha\geq \limsup\limits_{k\to\infty} \lambda_{m_k}(B)\geq C_{\infty}$, 
a contradiction. 

Thus, the discussion of the previous paragraph allows us to select $R\geq N$ such that $B_R B_{R+1}\dots$ does not contain $1_3 2_4 1 2_2 1_2 2_4 1 2_2 1_2 2_4 1 2_2 1_2 2_2$ or its transpose. 

Note that, by Lemma \ref{l.F4}, our choices of $N\in\mathbb{N}$ and $\{n_k\}_{k\in\mathbb{N}}$ imply that 
\begin{itemize}
\item either $B_{n_k-14}\dots B_{n_k+16}=1_2 2_4 1 2_2 1_2 2_4 1 2_2 1_2 2_4 1 2_2 1_2 2_2$
\item or $(B_{n_k-16}\dots B_{n_k+14})^T = 1_2 2_4 1 2_2 1_2 2_4 1 2_2 1_2 2_4 1 2_2 1_2 2_2$.
\end{itemize}
for each $k\in\mathbb{N}$ with $n_k\geq N+15$. 

If the first possibility occurs for all $n_k > R+15$, then the facts that $\lambda_n(B)<3.1181201786$ for all $n\geq N$ and the sequence $B_RB_{R+1}\dots$ does not contain $1_3 2_4 1 2_2 1_2 2_4 1 2_2 1_2 2_4 1 2_2 1_2 2_2$ allow to apply $d_k:=\lfloor\frac{n_k-6-R}{9}\rfloor$ times Lemma \ref{l.F3'} at the positions $n_k-9(j-1)$, $j=1,\dots, d_k$ to deduce that the sequence $B$ has the form
$$\dots B_{n_k-9d_k}\dots B_{n_k}\dots B_{n_k+16}\dots = \dots \underbrace{212_21_22_3,\dots, 212_21_22_3}_{d_k \textrm{ times}} 2 1 2_2 1_2 2_4 1 2_2 1_2 2_2\dots$$
Because $R-15\leq n_k-9d_k\leq R+16$ and $n_k\to+\infty$, we get that $B$ has the form $\dots\overline{212_21_22_3}$.

If the second possibility occurs for some $k_0\in\mathbb{N}$ with $n_{k_0}\geq R+15$, then the facts that $\lambda_n(B)<B_{\infty}<\alpha_{\infty}+10^{-6}$ for all $n\geq N$ and the sequence $B_RB_{R+1}\dots$ does not contain the subsequence $(1_3 2_4 1 2_2 1_2 2_4 1 2_2 1_2 2_4 1 2_2 1_2 2_2)^T$ allow to apply Lemma \ref{l.F3'} at the positions $n_k+9a$, $a\in\mathbb{N}$, to deduce that the sequence $B$ has the form
$$\dots B_{n_{k_0}} B_{n_{k_0}+1}\dots B_{n_k+10}\dots = \dots \overline{2_4 1_2 2_2 1}$$

In any case, each possibility above would imply that
$$c_{\infty} < \alpha=\limsup\limits_{n\to+\infty}\lambda_n(B) = \ell(\overline{2_4 1_2 2_2 1})=c_{\infty},$$
a contradiction.

In summary, the existence of $\alpha\in L\cap (c_{\infty}, C_{\infty})$ would lead to a contradiction in any scenario. This proves the proposition.
\end{proof}

\begin{remark}\label{r.Berstein} As it was first observed in Theorem 1, pages 47 to 49 of Berstein's article \cite{Be73}, one can \emph{improve} Proposition \ref{p.Berstein1} by showing that $(c_{\infty}, C_{\infty})$ is the \emph{largest} interval disjoint from $L$ containing $\sigma$.

Actually, it is not difficult to show this refinement of Proposition \ref{p.Berstein1}: indeed, since this proposition ensures that $L\cap(c_{\infty}, C_{\infty}) = \emptyset$, and we have $c_{\infty}=\ell(\overline{2_4 1_2 2_2 1})\in L$, it suffices to prove that $C_{\infty}\in L$. For the sake of exposition (and to correct some mistakes in \cite{Be73}), we show this fact in Appendix \ref{a.Berstein} below.
\end{remark}

\section{Proof of Theorem \ref{t.M-L-piece-HD}}\label{s.proof}
The description of $(M\setminus L)\cap  (c_{\infty}, C_{\infty}) = M\cap  (c_{\infty}, C_{\infty})$ provided by Propositions \ref{p.Cantors-covering-M-L-piece} and \ref{p.Berstein1} allows us to compare this piece of $M\setminus L$ with the Cantor set
\begin{equation*}
Y:=\{[0;\gamma]:\gamma\in\{1,2\}^{\mathbb{N}} \textrm{ not containing the subwords in } P\}
\end{equation*}
where $P$ is the finite set of 27 words consisting of the words (1) to (13) in Lemma \ref{l.F1} above and their transposes, and the words $2 1_2 2_4 1 2_2 1_2 2_3$ and its transpose. 

\begin{proposition}\label{p.M-L-piece-HD1} $(M\setminus L)\cap  (3.118120178159, 3.118120178173)$ contains the set
$$\{[\overline{2_4, 1_2, 2_2, 1}] + [0; 1, 2_2, 1_2, 2_4, 1, 2_2, 1_2, 2_2, 1_2, \gamma]: 1_2 2_2 1_2 \gamma\in \{1,2\}^{\mathbb{N}} \textrm{ does not contain the subwords in } P\}$$
\end{proposition}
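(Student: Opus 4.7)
Given $\gamma\in\{1,2\}^{\mathbb{N}}$ such that $1_2 2_2 1_2\gamma$ avoids every subword in $P$, I will construct a bi-infinite sequence $B=B(\gamma)\in\{1,2\}^{\mathbb{Z}}$ whose Markov value $m(B)$ equals the prescribed number
\[
m \,=\, [\overline{2_4, 1_2, 2_2, 1}] + [0; 1, 2_2, 1_2, 2_4, 1, 2_2, 1_2, 2_2, 1_2, \gamma].
\]
Explicitly, set $B_0 B_1 B_2 \dots = \overline{2_4\,1_2\,2_2\,1}$ on the right and $B_{-1}, B_{-2}, B_{-3}, \dots = 1,2,2,1,1,2,2,2,2,1,2,2,1,1,2,2,1,1,\gamma_1,\gamma_2,\dots$ on the left. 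This $B$ is the transpose of the canonical form in Proposition \ref{p.Cantors-covering-M-L-piece}, with the periodic block and the $\gamma$-tail swapping sides; by construction $\lambda_0(B)=m$. The plan then has three steps: (i) prove $m(B)=\lambda_0(B)$, so $m\in M$; (ii) verify $m\in(3.118120178159,3.118120178173)$; (iii) invoke Proposition \ref{p.Berstein1} to deduce $m\notin L$.

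For step (i), I check $\lambda_n(B)\leq\lambda_0(B)$ at every $n\neq 0$. Positions with $B_n=1$ give $\lambda_n(B)<2$ trivially, so only $B_n=2$ matters. For $n\geq 1$, the right tail $B_n B_{n+1}\dots$ is an integer shift of the period $\overline{2_4\,1_2\,2_2\,1}$, whose supremum Markov value over the bi-infinite periodic extension is $c_\infty$, attained inside a $2_4$-block; the non-periodic left half of $B$ only \emph{decreases} the left summand $[0;B_{n-1},B_{n-2},\dots]$ compared with the purely periodic extension, by the alternating-monotonicity rule applied at the first coordinate of disagreement, so $\lambda_n(B)<\lambda_0(B)$. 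For the finitely many positions $-18\leq n\leq -1$ inside the fixed central block of $B$, one establishes the inequality by direct continued-fraction estimates in the spirit of Lemmas \ref{l.F1} and \ref{l.F2}. For $n\leq -19$ (positions deep in the $\gamma$-region), the $P$-avoidance hypothesis guarantees that no window of $B$ around $n$ matches any of the 27 forbidden strings; Lemma \ref{l.F1} then yields $\lambda_n(B)\leq 3.1181201786$, while Lemmas \ref{l.F3'} and \ref{l.14} preclude the pattern accumulation that could otherwise push $\lambda_n(B)$ toward $C_\infty$. Combined with the bound $m>3.118120178159$ from step (ii), this gives $\lambda_n(B)<m$ strictly.

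For step (ii), the first summand $[\overline{2_4,1_2,2_2,1}]$ is a fixed constant; by alternating monotonicity, the range of the second summand over admissible $\gamma$ is controlled by evaluating the two extremal admissible tails (those that saturate the $P$-avoidance at the earliest positions in $\gamma$). A direct continued-fraction computation to eleven decimal places then produces both $m>3.118120178159$ and $m<3.118120178173$. In particular $m\in(c_\infty,C_\infty)$, since $c_\infty<3.118120178159$ and $C_\infty>3.118120178173$ from the numeric values recorded in Section \ref{s.HD(M-L)>0}. Step (iii) is then immediate from Proposition \ref{p.Berstein1}, which asserts $L\cap(c_\infty,C_\infty)=\emptyset$.

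The principal obstacle is step (i) at positions $n\leq -19$: the 27 words of $P$ have been engineered through Lemmas \ref{l.F1}--\ref{l.14} precisely so that avoidance forces $\lambda_n(B)<m$ uniformly on the $\gamma$-region, and the required argument is a direct mirror of the analysis behind Proposition \ref{p.Cantors-covering-M-L-piece}, with the roles of left and right interchanged. The remaining parts (periodic analysis for $n\geq 1$, finite case checks for $-18\leq n\leq -1$, numerical verification in step (ii), and the appeal to Proposition \ref{p.Berstein1}) are comparatively routine.
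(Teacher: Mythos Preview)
Your overall strategy is sound, and transposing the canonical sequence is harmless. However, step (i) for the $\gamma$-region ($n\leq -19$ in your convention) contains a real gap. You write that $P$-avoidance together with Lemma~\ref{l.F1} yields $\lambda_n(B)\leq 3.1181201786$. This is the contrapositive taken in the wrong direction: Lemma~\ref{l.F1} says that \emph{presence} of a forbidden string forces $\lambda_j(B)>3.1181201786$; its contrapositive does not say that \emph{absence} forces $\lambda_j(B)\leq 3.1181201786$. Even granting that bound, it would be useless, since $3.1181201786 > 3.118120178173 \geq \lambda_0(B)$, so you could not conclude $\lambda_n(B)<\lambda_0(B)$. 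Invoking Lemmas~\ref{l.F3'} and~\ref{l.14} does not fix this: those lemmas control specific long patterns near $C_\infty$ and do not produce a bound below $m$.

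The paper closes this gap differently. At any position $n$ in the $\gamma$-region with $B_n=2$, avoidance of the forbidden strings forces one of the \emph{allowed} configurations (15)--(21) of Lemma~\ref{l.F2} to appear around $n$; this is exactly the dichotomy driving the proof of Lemma~\ref{l.F4}. Lemma~\ref{l.F2} then gives $\lambda_n(B)<3.118117$, which is comfortably below $\lambda_0(B)$. The same mechanism (items (15), (16), (19) of Lemma~\ref{l.F2}) handles the periodic side cleanly: it shows $\lambda_n(B)<3.118117$ at every periodic position except the $2^*$ positions $n=9k$, and at those one compares $\lambda_{9k}(B)$ with $\lambda_0(B)$ directly by noting that the two values share the same periodic summand while the other summand differs only by an extra periodic prefix. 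Your parity heuristic (``the non-periodic half only decreases the left summand'') is the right intuition for the $2^*$ positions but is not a substitute for the Lemma~\ref{l.F2} analysis at the remaining positions.
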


\begin{proof} Consider $B=\overline{1 2_2 1_2 2_4}; 1 2_2 1_2 2_4 1 2_2 1_2 2_2 1_2\gamma$ where $1_2 2_2 1_2 \gamma\in \{1, 2\}^{\mathbb{N}}$ does not contain subwords in $P$ and $;$ serves to indicate the zeroth position.

Note that 
$$\lambda_0(B)\leq [\overline{2_4, 1_2, 2_2, 1}] + [0; 1, 2_2, 1_2, 2_4, 1, 2_2, 1_2, 2_2, 1, \overline{2, 1}] < 3.118120178173$$
and
$$\lambda_0(B)\geq [\overline{2_4, 1_2, 2_2, 1}] + [0; 1, 2_2, 1_2, 2_4, 1, 2_2, 1_2, 2_2, 1, \overline{1, 2}] >3.118120178159,$$
and Lemma \ref{l.F2} (15), (16), (19) imply that $\lambda_n(B)<3.118117$ for all positions $n\leq 18$ except possibly for $n=-9k$ with $k\geq 0$.

On the other hand,
\begin{eqnarray*}\lambda_{-9k}(B) &=& [\overline{2_4, 1_2, 2_2, 1}] + [0; \underbrace{1, 2_2, 1_2, 2_4, \dots, 1, 2_2, 1_2, 2_4}_{k \textrm{ times }}, 1, 2_2, 1_2, 2_4, 1, 2_2, 1_2, 2_2, 1_2, \gamma]  \\ &<& [\overline{2_4, 1_2, 2_2, 1}] + [0; 1, 2_2, 1_2, 2_4, 1, 2_2, 1_2, 2_2, 1_2, \gamma] = \lambda_0(B),
\end{eqnarray*}
for all $k\geq 1$.

Furthermore, since $1_2 2_2 1_2 \gamma$ does not contain subwords in $P$, it follows from (the proof of) Lemma \ref{l.F4} that $\lambda_n(B)<3.118117$ for all $n\geq 19$.

This shows that $m(B)=\lambda_0(B)= [\overline{2_4, 1_2, 2_2, 1}] + [0; 1, 2_2, 1_2, 2_4, 1, 2_2, 1_2, 2_2, 1_2, \gamma]$ belongs to $(M\setminus L)\cap  (3.118120178159, 3.118120178173)$.
\end{proof}

\begin{proposition}\label{p.M-L-piece-HD2} $(M\setminus L)\cap  (c_{\infty}, C_{\infty})$ is contained in the union of
$$\mathcal{C} = \{[0; \overline{2_3, 1_2, 2_2, 1, 2}] + [2; 1, 2_2, 1_2, 2_4, 1, 2_2, 1_2, 2_2, \theta,\overline{2_4 1 2_2 1_2}]: \theta \textrm{ is a finite word in } 1 \textrm{ and } 2\}$$
and the sets
$$\mathcal{D}(\delta) = \{0; \overline{2_3, 1_2, 2_2, 1, 2}] + [2; 1, 2_2, 1_2, 2_4, 1, 2_2, 1_2, 2_2, \delta,\gamma]: \textrm{ no subword of }\gamma\in \{1,2\}^{\mathbb{N}} \textrm{ belongs to } P\},$$
where $\delta$ is a finite word in $1$ and $2$.
\end{proposition}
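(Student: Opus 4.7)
The plan is to combine Propositions \ref{p.Berstein1} and \ref{p.Cantors-covering-M-L-piece} to reduce the statement to a structural dichotomy on the right tail of the bi-infinite sequence realising the Markov value. By Proposition \ref{p.Berstein1}, $(M\setminus L)\cap (c_\infty, C_\infty) = M\cap (c_\infty, C_\infty)$, so any such $m$ may be written as $m = m(B) = \lambda_0(B)$ for a bi-infinite sequence $B\in\{1,2\}^{\mathbb{Z}}$ whose tail $\dots B_{-14}\dots B_0 B_1\dots B_{16}$ equals $\overline{1\,2_2\,1_2\,2_4}\,1\,2_2\,1_2\,2_4\,1\,2_2\,1_2\,2_2$, and whose right tail $B_{17}B_{18}\dots$ obeys the two conditions of the second bullet of Proposition \ref{p.Cantors-covering-M-L-piece} from some position $N\ge 17$ onwards.

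My first step is to decode the left-hand summand of $\lambda_0(B)$. Since the left tail $\dots B_{-1}B_0$ is periodic with period $1\,2_2\,1_2\,2_4$ in the forward direction, reading it backwards from $B_{-1}$ produces the cyclic shift $\overline{2_3\,1_2\,2_2\,1\,2}$; together with $B_0 = 2$ and $B_1\dots B_{16} = 1\,2_2\,1_2\,2_4\,1\,2_2\,1_2\,2_2$, this yields
$$\lambda_0(B) = [0;\overline{2_3,1_2,2_2,1,2}] + [2; 1, 2_2, 1_2, 2_4, 1, 2_2, 1_2, 2_2, B_{17}, B_{18}, \dots].$$
The mildly delicate point here is merely the alignment of the indices with the characters of the explicit word $1_2\,2_4\,1\,2_2\,1_2\,2_4$; no new estimate is needed.

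It remains to identify the right tail, and there are precisely two cases, dictated by the second bullet of Proposition \ref{p.Cantors-covering-M-L-piece}. In the \emph{eventually-periodic} case, the tail $B_N B_{N+1}\dots$ contains the subword $2_3\,1_2\,2_2\,1\,2_4\,1_2\,2 = B_{n-8}\dots B_{n+6}$ for some $n$, whence $B_n B_{n+1}\dots = \overline{2_4\,1\,2_2\,1_2}$; setting $\theta := B_{17}\dots B_{n-1}$, the formula above exhibits $m$ as an element of $\mathcal{C}$. In the \emph{remaining} case the subword $2_3\,1_2\,2_2\,1\,2_4\,1_2\,2$ never appears in $B_N B_{N+1}\dots$, so the whole tail $B_N B_{N+1}\dots$ is a sequence in $\{1,2\}^{\mathbb{N}}$ containing no subword of $P$ (the other $26$ words of $P$ being excluded by the first bullet). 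Setting $\delta := B_{17}\dots B_{N-1}$ and $\gamma := B_N B_{N+1}\dots$ then exhibits $m$ as an element of $\mathcal{D}(\delta)$.

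Since only an inclusion is claimed, no further constraint need be verified on the points of $\mathcal{C}$ or $\mathcal{D}(\delta)$, so the proof is essentially a bookkeeping translation of Proposition \ref{p.Cantors-covering-M-L-piece}; I anticipate no serious obstacle beyond keeping track of the indices discussed above.
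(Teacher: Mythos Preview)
Your proposal is correct and follows essentially the same route as the paper's own proof: invoke Proposition~\ref{p.Cantors-covering-M-L-piece} to get the structure of $B$, then split into the two cases according to whether the tail $B_N B_{N+1}\dots$ contains $2_3\,1_2\,2_2\,1\,2_4\,1_2\,2$. The only cosmetic differences are that the paper writes the intermediate finite word as a concatenation $\theta=\delta\mu$ rather than directly as $B_{17}\dots B_{n-1}$, and that your appeal to Proposition~\ref{p.Berstein1} is harmless but unnecessary (since $(M\setminus L)\cap(c_\infty,C_\infty)\subset M\cap(c_\infty,C_\infty)$ trivially, Proposition~\ref{p.Cantors-covering-M-L-piece} already applies).
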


\begin{proof} By Proposition \ref{p.Cantors-covering-M-L-piece}, if $m\in (M\setminus L)\cap (c_{\infty}, C_{\infty})$, then $m=m(B)=\lambda_0(B)$ with
$$B=\overline{21 2_2 1_2 2_3}2^*1 2_2 1_2 2_4 1 2_2 1_2 2_2\delta\gamma$$
where the asterisk indicates the zeroth position, $\delta$ is a finite word in $1$ and $2$, and the infinite word $\gamma$ satisfies:
\begin{itemize}
\item $\gamma$ does not contain the subwords (1) to (13) and their transposes, and $2 1_2 2_4 1 2_2 1_2 2_3$, 
\item if $\gamma$ contains the subword $2_3 1_2 2_2 1 2_4 1_2 2$, then
$\gamma = \mu\overline{2_4 1 2_2 1_2}$ with $\mu$ a finite word in $1$ and $2$.
\end{itemize}

Hence:
\begin{itemize}
\item if $\gamma$ contains $2_3 1_2 2_2 1 2_4 1_2 2$, then
$$m(B)=\lambda_0(B)=[0; \overline{2_3, 1_2, 2_2, 1, 2}] + [2; 1, 2_2, 1_2, 2_4, 1, 2_2, 1_2, 2_2, \delta,\mu,\overline{2_4 1 2_2 1_2}]$$
where $\theta=\delta\mu$ is a finite word in $1$ and $2$, i.e., $m(B)\in\mathcal{C}$;
\item otherwise,
$$m(B) = \lambda_0(B)=[0; \overline{2_3, 1_2, 2_2, 1, 2}] + [2; 1, 2_2, 1_2, 2_4, 1, 2_2, 1_2, 2_2, \delta,\gamma]$$
where $\gamma$ does not contain the subwords (1) to (13) and their transposes, and $2 1_2 2_4 1 2_2 1_2 2_3$ and its transpose $2_3 1_2 2_2 1 2_4 1_2 2$, i.e., $m(B)\in\mathcal{D}(\delta)$.
\end{itemize}
This completes the argument.
\end{proof}

At this stage, Theorem \ref{t.M-L-piece-HD} follows directly from Propositions \ref{p.M-L-piece-HD1} and \ref{p.M-L-piece-HD2}: on one hand, by Proposition \ref{p.M-L-piece-HD1}, $(M\setminus L)\cap  (c_{\infty}, C_{\infty})$ contains a set diffeomorphic to $Y$ and, hence,
$$HD((M\setminus L)\cap  (c_{\infty}, C_{\infty})) \geq HD(Y)$$
On the other hand, by Proposition \ref{p.M-L-piece-HD2}, $(M\setminus L)\cap   (c_{\infty}, C_{\infty})$ is contained in
$$\mathcal{C}\cup\bigcup\limits_{n\in\mathbb{N}} \left(\bigcup\limits_{\delta\in\{1,2\}^n}\mathcal{D}(\delta)\right)$$
Since $\mathcal{C}$ is a countable set and $\{\mathcal{D}(\delta):\delta\in\{1,2\}^n, n\in\mathbb{N}\}$ is a countable family of subsets diffeomorphic to $Y$, it follows that
$$HD((M\setminus L)\cap (c_{\infty}, C_{\infty})) \leq HD(Y)$$

This proves Theorem \ref{t.M-L-piece-HD}.

\section{Lower bounds on $HD((M\setminus L)\cap (c_{\infty}, C_{\infty}))$}\label{a.PT}

Note that the definition of $Y$ in \eqref{e.Cantor-Y} implies that $Y$ contains the Gauss-Cantor set $K(\{1_2, 2_2\})$. Thus, Theorem \ref{t.M-L-piece-HD} implies that 
$$HD((M\setminus L)\cap (c_{\infty}, C_{\infty})) = HD(Y) \geq HD(K(\{1_2, 2_2\}))$$

In this section, we complete the proof of Theorem \ref{t.A} by showing that:

\begin{proposition}\label{p.HD(M-L)>0} One has $0.2628 < HD(K(\{1_2, 2_2\})) < 0.2646$.
\end{proposition}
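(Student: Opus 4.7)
The plan is to identify $K(\{1_2,2_2\})$ with the attractor of the two-branch conformal iterated function system on $[0,1]$ generated by the M\"obius maps
\[
T_1(x) = \frac{1+x}{2+x} = [0;1,1,x], \qquad T_2(x) = \frac{2+x}{5+2x} = [0;2,2,x],
\]
with derivatives $T_1'(x)=(2+x)^{-2}$ and $T_2'(x)=(5+2x)^{-2}$. Since the images $T_1([0,1])=[1/2,2/3]$ and $T_2([0,1])=[2/5,3/7]$ are disjoint, the IFS satisfies the strong separation condition, and Bowen's formula characterises $s_0 := HD(K(\{1_2,2_2\}))$ as the unique zero of the topological pressure $s\mapsto P(-s\log|T'|)$. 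For each finite word $w\in\{11,22\}^n$ let $T_w$ be the corresponding composition and $I_w := T_w([0,1])$ the associated cylinder.

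From here, the two numerical bounds would be extracted by the standard Moran-type sandwich. Since the cylinders $\{I_w : |w|=n\}$ cover $K(\{1_2,2_2\})$ and $|I_w| \leq \sup_{[0,1]}|T_w'|$, a Hausdorff-measure calculation shows that $s_0 < s$ as soon as $\sum_{|w|=n}(\sup|T_w'|)^s < 1$ for some $n$. Symmetrically, combining the bound $|I_w| \geq \inf_{[0,1]}|T_w'|$ with the uniform bounded-distortion estimate $\sup|T_w'|/\inf|T_w'| \leq C$ (valid with an explicit $C$ because $T_1,T_2$ extend as contractions to a neighbourhood of $[0,1]$ in $\mathbb{C}$) and the sub-multiplicative growth of the partition function yields $s_0 > s$ whenever $\sum_{|w|=n}(\inf|T_w'|)^s > 1$ for some $n$. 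For each word $w$ of length $n$, the derivative $T_w'$ is a rational function built from the $SL_2(\mathbb{Z})$ matrices attached to $T_1$ and $T_2$, and because $T_1',T_2'$ are monotone on $[0,1]$ the extrema $\sup|T_w'|$, $\inf|T_w'|$ are attained at endpoints and are computed exactly. It then remains to test the two inequalities above at $s=0.2646$ and $s=0.2628$ respectively for $n$ large enough that both hold.

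A more efficient variant, which I would favour in practice, is the spectral approach of Jenkinson--Pollicott: truncate the transfer operator
\[
(\mathcal{L}_s\varphi)(y) \;=\; T_1'(y)^s\,\varphi(T_1 y) + T_2'(y)^s\,\varphi(T_2 y)
\]
to a finite-dimensional subspace of functions holomorphic on a disk containing $[0,1]$; its leading eigenvalue equals $1$ precisely when $s=s_0$, and the truncation error decays super-exponentially in the truncation dimension, so both rigorous enclosures emerge from the spectrum of a single small matrix. The main obstacle is purely quantitative: the target interval $(0.2628,0.2646)$ is only $0.0018$ wide, so either $n$ must be taken moderately large (around $n=8$--$10$ in the Bowen-formula approach) or the operator-norm bound on the truncation error must be controlled with care. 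In either case the argument reduces to a finite, certifiable computation.
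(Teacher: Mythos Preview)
Your proposal is correct and follows essentially the same route as the paper: the paper also realises $K(\{1_2,2_2\})$ as the attractor of the two inverse branches $\phi_{11},\phi_{22}$, defines for each level $n$ the quantities $\alpha_n,\beta_n$ by $\sum_{R}(\sup_R|(\Psi^n)'|)^{-\alpha_n}=1=\sum_{R}(\inf_R|(\Psi^n)'|)^{-\beta_n}$, invokes the Palis--Takens inequality $\alpha_n\le HD(K)\le\beta_n$, observes that the extrema are attained at the endpoints of the convex hull of $K$, and then computes $\alpha_{12}=0.2628\ldots$, $\beta_{12}=0.2645\ldots$ numerically. The only substantive differences are that the paper takes the extrema over the convex hull $I=[\,[0;\overline{2}],[0;\overline{1}]\,]$ rather than over $[0,1]$, which sharpens the sandwich at each level, and that it works at $n=12$ rather than your estimated $n=8$--$10$; with your looser domain you would likely need to go at least that far, so you should not undersell the depth of the computation.
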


The convex hull of $K(\{1_2,2_2\})$ is the interval $I$ with extremities $[0;\overline{1}]$ and $[0;\overline{2}]$. The images $I_{11}:=\phi_{11}(I)$ and $I_{22} := \phi_{22}(I)$ of $I$ under the inverse branches
$$\phi_{11}(x):= \frac{1}{1+\frac{1}{1+\frac{1}{x}}} \quad \textrm{and} \quad \phi_{22}(x) := \frac{1}{2+\frac{1}{2+\frac{1}{x}}}$$
of the first two iterates of the Gauss map $G(x):=\{1/x\}$ provide the first step of the construction of the Cantor set $K(\{1_2,2_2\})$. 

The collection $\mathcal{R}^n$ of intervals of the $n$th step of the construction of $K(\{1_2,2_2\})$ is 
$$\mathcal{R}^n:=\{\phi_{x_1}\circ\dots\circ\phi_{x_n}(I): (x_1,\dots, x_n)\in\{11, 22\}^n\}$$

Given $R\in\mathcal{R}^n$, let
$$\lambda_{n,R}:=\inf\limits_{x\in R}|(\Psi^n)'(x)|, \quad \Lambda_{n,R}:=\sup\limits_{y\in R}|(\Psi^n)'(y)|,$$
and define $\alpha_n\in [0,1]$, $\beta_n\in [0,1]$ by
$$\sum\limits_{R\in\mathcal{R}^n} \left(\frac{1}{\Lambda_{n,R}}\right)^{\alpha_n} = 1 = \sum\limits_{R\in\mathcal{R}^n} \left(\frac{1}{\lambda_{n,R}}\right)^{\beta_n}$$

It is shown in \cite[pp. 69--70]{PT} that $\alpha_n\leq HD(K(\{1_2,2_2\}))\leq\beta_n$ for all $n\in\mathbb{N}$.

Therefore, we can estimate on $K(\{1_2, 2_2\})$ by computing $\alpha_n$ and $\beta_n$ for some particular values of $n\in\mathbb{N}$.

In this direction, we observe that
$$\lambda_{n,R} = \min\left\{\prod\limits_{i=1}^{n}\left(\frac{1}{[0;x_i,\dots, x_n, \overline{1}]}\right)^2, \prod\limits_{i=1}^{n}\left(\frac{1}{[0;x_i,\dots, x_n, \overline{2}]}\right)^2 \right\}$$
and
$$\Lambda_{n,R} = \max\left\{\prod\limits_{i=1}^{n}\left(\frac{1}{[0;x_i,\dots, x_n, \overline{1}]}\right)^2, \prod\limits_{i=1}^{n}\left(\frac{1}{[0;x_i,\dots, x_n, \overline{2}]}\right)^2 \right\}$$ 
for $R=\psi_{x_1}\circ\dots\circ\psi_{x_n}(I)\in\mathcal{R}^n$ associated to a string $(x_1,\dots, x_n)\in\{11, 22\}^n$.

Thus, $\alpha_n$ and $\beta_n$ are the solutions of
$$\sum\limits_{(x_1,\dots,x_n)\in\{11,22\}^n}\left(\min\{[0;x_i,\dots, x_n, \overline{1}], [0;x_i,\dots, x_n, \overline{2}]\}\right)^{2\alpha_n}=1$$
and
$$\sum\limits_{(x_1,\dots,x_n)\in\{11,22\}^n}\left(\max\{[0;x_i,\dots, x_n, \overline{1}], [0;x_i,\dots, x_n, \overline{2}]\}\right)^{2\beta_n}=1$$

A quick computer search for the values of $\alpha_{12}$ and $\beta_{12}$ reveals that
$$\alpha_{12} = 0.2628... \quad \textrm{and} \quad \beta_{12} = 0.2645...$$

In particular, $0.2628<\alpha_{12}\leq HD(K(\{1_2,2_2\}))\leq \beta_{12}<0.2646$, so that the proof of Proposition \ref{p.HD(M-L)>0} and, \emph{a fortiori}, Theorem \ref{t.A} is now complete.

\section{The smallest known number in $M\setminus L$}\label{a.new-numbers}

Consider the sequence $\rho\in \{1,2\}^{\mathbb{Z}}$ given by
$$\rho:=\overline{1 2_2 1_2 2_4};1 2_2 1_2 2_4 1 2_2 1_2 2_2 1_2 \overline{2_3 1_3}$$
where $;$ serves to indicate the zeroth position.

In this section, we show\footnote{To the best of our knowledge, the previous smallest known elements of $M\setminus L$ appearing in the literature were the elements of the countable subset $\mathcal{F}$ described by Freiman in the proof of Theorem 3 at page 200 of \cite{Fr68}. Concretely, $\mathcal{F}$ is the set of Markov values of the sequences $S(w):=\overline{1 2_2 1_2 2_4};1 2_2 1_2 2_4 1 2_2 1_2 2_2 1 w 1  \overline{2_2 1_2 2_2 1 2_2}$ where $w$ is any finite word in $1$ and $2$ such that $m(S(w))=\lambda_0(S(w))$. In this setting, the constant $\sigma = 3.11812017815993\dots\in\mathcal{F}$ explicitly mentioned by Freiman at page 195 of \cite{Fr68} corresponds to the empty word, i.e., $\sigma = \lambda_0(S(\emptyset))$. Note that $\sigma>f$ and, more generally, our proof of Lemma \ref{l.gamma-bound} below says that $x>f$ for all $x\in\mathcal{F}$. Nevertheless, one can easily choose finite words $w$ in order to check that $\inf\mathcal{F}=f$.} that
\begin{eqnarray*}
f&:=&\lambda_0(\rho)= [\overline{2_4, 1_2, 2_2, 1}]+[0;1, 2_2, 1_2, 2_4, 1, 2_2, 1_2, 2_2, 1_2, \overline{2_3, 1_3}] \\ 
&=& 3.11812017815984\dots
\end{eqnarray*}
is the smallest element of $(M\setminus L)\cap (c_{\infty}, C_{\infty})$.

We begin by proving that $f\in M$:

\begin{lemma}\label{l.gamma-in-M} One has $f=\lambda_0(\rho)=m(\rho)\in M$.
\end{lemma}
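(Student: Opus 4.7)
The plan is to apply Proposition \ref{p.M-L-piece-HD1} with the specific choice $\gamma = \overline{2_3, 1_3}$. With this choice the sequence $B$ constructed in the proof of that proposition is exactly $\rho$, and the Markov value $[\overline{2_4, 1_2, 2_2, 1}] + [0; 1, 2_2, 1_2, 2_4, 1, 2_2, 1_2, 2_2, 1_2, \gamma]$ evaluates to $f$ by the very definition of $f$. Hence the conclusion $m(\rho) = \lambda_0(\rho) = f$ is automatic once the hypothesis of Proposition \ref{p.M-L-piece-HD1} is checked; that hypothesis is that the one-sided word
$$1_2 2_2 1_2 \gamma = 1, 1, 2, 2, 1, 1, 2, 2, 2, 1, 1, 1, 2, 2, 2, 1, 1, 1, \dots$$
contains no subword from the set $P$ of $27$ forbidden words.

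To verify the hypothesis I would use three structural observations about the word $1_2 2_2 1_2 \overline{2_3, 1_3}$: (i) the maximal run of $2$'s in this word has length $3$, so subwords in $P$ that contain a $2_4$ or $2_5$ substring cannot occur; this disposes of Lemma \ref{l.F1} items (4)--(13), their transposes, and both variants of the word $2 1_2 2_4 1 2_2 1_2 2_3$. (ii) The minimal run of $1$'s has length $2$, so subwords in $P$ requiring an isolated ``$1$'' flanked by two $2$-blocks cannot occur; this disposes of items (1), (2), (3) and their transposes. (iii) In the periodic tail $\overline{2_3, 1_3}$ every $2_3$-block is flanked on both sides by $1_3$-blocks rather than $1_2$-blocks, which handles any residual candidate requiring a $2_3 1_2 2_3$ pattern. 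Together (i)--(iii) exhaust the 27 members of $P$.

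With the subword condition verified, Proposition \ref{p.M-L-piece-HD1} (whose proof directly establishes $m(B) = \lambda_0(B)$ for every valid $\gamma$) gives $m(\rho) = \lambda_0(\rho) = f$, and in particular $f \in M$, which is the claim of the lemma. The only real work is the enumeration over the $27$ forbidden subwords, which I expect to be the main (though mild) obstacle; it is however essentially mechanical once observations (i)--(iii) above are in place, since each of the three rules simultaneously eliminates a large group of candidates and the rigidity of the periodic tail $\overline{2_3, 1_3}$ leaves no ambiguity.
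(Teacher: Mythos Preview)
Your approach is correct and reaches the same conclusion, but it differs from the paper's own argument. The paper proves Lemma~\ref{l.gamma-in-M} directly: it invokes Lemma~\ref{l.F2} (15), (16), (19) to show $\lambda_j(\rho)<3.118117$ at every position except $j=-9k$ with $k\ge 0$, and then checks by an explicit continued-fraction comparison that $\lambda_{-9k}(\rho)<\lambda_0(\rho)$ for all $k\ge 1$. No reference to the forbidden-word set $P$ is needed.

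Your route instead feeds $\gamma=\overline{2_3,1_3}$ into Proposition~\ref{p.M-L-piece-HD1}. This is legitimate, and your structural checks (maximal $2$-run of length $3$; minimal $1$-run of length $2$) do eliminate all $27$ words in $P$; observation~(iii) is in fact redundant, since (i) and (ii) already cover everything. The trade-off is that you must look inside the \emph{proof} of Proposition~\ref{p.M-L-piece-HD1} to extract the equality $m(B)=\lambda_0(B)$, since the proposition's statement only asserts membership in $M\setminus L$. Under the hood both arguments are the same---the proof of Proposition~\ref{p.M-L-piece-HD1} uses exactly Lemma~\ref{l.F2} (15), (16), (19) and the same $-9k$ comparison---so your version is a repackaging rather than a genuinely new method. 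The paper's direct argument is shorter because it bypasses the $27$-word enumeration entirely.
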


\begin{proof}
From Lemma \ref{l.F2} (15), (16), (19), one has $\lambda_j(\rho)<3.118117$ for all $j\in\mathbb{Z}$ except possibly for $j=-9k$, $k\geq 0$. 

On the other hand, we have
\begin{eqnarray*}
 \lambda_{-9k}(\rho) &=& [\overline{2_4, 1_2, 2_2, 1}]+[0;\underbrace{1, 2_2, 1_2, 2_4,\dots, 1, 2_2, 1_2, 2_4}_{k+1 \textrm{ times }}, 1, 2_2, 1_2, 2_2, 1_2, \overline{2_3, 1_3}] \\
   &<& [\overline{2_4, 1_2, 2_2, 1}]+[0;1, 2_2, 1_2, 2_4, 1, 2_2, 1_2, 2_2, 1_2, \overline{2_3, 1_3}] = \lambda_0(\rho)
\end{eqnarray*}
for each $k\geq 1$.

In other terms, we showed that $\lambda_j(\rho)<\lambda_0(\rho)$ for all $j\neq 0$, and, \emph{a fortiori}, $f=\lambda_0(\rho)=m(\rho)\in M$.
\end{proof}

Let us now show that $m\geq f$ for all $m\in M\cap(c_{\infty}, C_{\infty})$:
\begin{lemma}\label{l.gamma-bound}
  If $m\in M\cap(c_{\infty},C_{\infty})$, then $m\geq f$.
\end{lemma}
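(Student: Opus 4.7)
By Proposition \ref{p.Cantors-covering-M-L-piece}, $m = \lambda_0(B)$ for a bi-infinite $B$ whose left half is the periodic extension $\overline{1\,2_2\,1_2\,2_4}$ and whose middle block $B_{-14}\cdots B_{16}$ equals $1_2\,2_4\,1\,2_2\,1_2\,2_4\,1\,2_2\,1_2\,2_4\,1\,2_2\,1_2\,2_2$. Thus
\[
\lambda_0(B) \,=\, [0;\,2_3,\,1_2,\,2_2,\,1,\,\overline{2_4,\,1_2,\,2_2,\,1}\,] \,+\, [2;\,W,\,B_{17},\,B_{18},\ldots\,],
\]
with $W := 1,\,2_2,\,1_2,\,2_4,\,1,\,2_2,\,1_2,\,2_2$. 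The analogous decomposition of $f = \lambda_0(\rho)$ has the same left summand and right tail $(\rho_{17},\rho_{18},\ldots) = (1,\,1,\,\overline{2_3,\,1_3})$, so the inequality $m \geq f$ reduces to a comparison of the two right-hand continued fractions. The plan is a position-by-position induction on $n \geq 17$, using the sign-alternating comparison rule for continued fractions: at each step either $B_n = \rho_n$ (proceed), or $B_n \neq \rho_n$ in the ``favorable'' direction (yielding $m > f$, done), or $B_n \neq \rho_n$ in the ``unfavorable'' direction, which must be ruled out.

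A parity check identifies the unfavorable positions as $n = 17, 20, 23, 26, 29, \ldots$, splitting into ``even $n$ with $\rho_n = 2$'' (at $n = 20, 26, 32,\ldots$) and ``odd $n$ with $\rho_n = 1$'' (at $n = 17, 23, 29,\ldots$). The anchor case $n = 17$ is the only one requiring a global argument: if $B_{17} = 2$, then $B_3\cdots B_{17} = 2\,1_2\,2_4\,1\,2_2\,1_2\,2_3$ is the bad tail-word of Lemma \ref{l.F3'}; iterating that lemma at positions $9, 18, 27,\ldots$ together with Lemma \ref{l.14} (which forbids the word $1_3\,2_4\,1\,2_2\,1_2\,2_4\,1\,2_2\,1_2\,2_4\,1\,2_2\,1_2\,2_2$) forces $B = \overline{1\,2_2\,1_2\,2_4}$ and hence $m(B) = c_\infty$, contradicting $m > c_\infty$. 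At $n \in \{20, 26, 32,\ldots\}$, the unfavorable $B_n = 1$ produces $B_{n-2}B_{n-1}B_n = 1\,2\,1$, prohibited by Lemma \ref{l.F1}(1). At $n \in \{23, 29, 35,\ldots\}$, the unfavorable $B_n = 2$ successively forces $B_{n+1} = 2$ (to avoid $1\,2\,1$) and $B_{n+2} = 1$ (to avoid $2_3\,1\,2_3$ of Lemma \ref{l.F1}(3)); a lower bound on $\lambda_{n-2}(B)$ using $[2;\,1,\,2,\,2,\,1,\,\overline{2,\,1}\,] = (28 + \sqrt 3)/11 \approx 2.703$ and the fixed backward continued fraction $[0; B_{n-3}, B_{n-4},\ldots] \approx 0.419$ then gives $\lambda_{n-2}(B) \geq 3.12 > C_\infty$, contradicting $m < C_\infty$; the period-$6$ self-similarity of $\rho$'s tail $\overline{2_3, 1_3}$ makes this bound uniform across all such $n$.

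The main obstacle is verifying that the numerical lower bound at the ``odd with $\rho_n = 1$'' positions strictly exceeds $C_\infty \approx 3.118$. Once this is in hand---together with the elementary local exclusions at even positions and the global rigidity argument at $n = 17$---the induction closes and $\lambda_0(B) \geq f$ holds for every admissible $B$, which is exactly $m \geq f$.
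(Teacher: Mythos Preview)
Your induction at positions $n\geq 20$ is essentially correct and mirrors the paper's recursive use of Lemma~\ref{l.F1}(1),(2); the direct estimate $\lambda_{n-2}(B)\geq (28+\sqrt3)/11 + [0;2,2,1,1,\ldots] > 3.12 > C_\infty$ does go through once one checks that every backward tail in question begins $2,2,1,1$, giving $[0;B_{n-3},B_{n-4},\ldots]\geq [0;2,2,1,1,\overline{2,1}]=(28+\sqrt3)/71\approx 0.4188$.

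The gap is at $n=17$. You assert that $B_{17}=2$ forces $B=\overline{1\,2_2\,1_2\,2_4}$ via iteration of Lemma~\ref{l.F3'} at $9,18,27,\ldots$, but this is not so: applying Lemma~\ref{l.F3'} at position $9$ determines only $B_{-5}\cdots B_{25}$; to apply it again at position $18$ you would need $B_{12}\cdots B_{26}=2\,1_2\,2_4\,1\,2_2\,1_2\,2_3$, hence $B_{26}=2$, which is \emph{not} a consequence of $B_{17}=2$. In fact $B_{17}=2$ with $B_{26}=1$ is perfectly compatible with $m(B)\in(c_\infty,C_\infty)$, and in that case $\lambda_0(B)<f$ (position $17$ is odd), so your comparison of $\lambda_0(B)$ with $f$ simply fails.

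The paper repairs this by not insisting on position $0$: one takes the smallest $k_0\geq 0$ with $B_{17+9k_0}=1$ (which exists, since otherwise iterated Lemma~\ref{l.F3'} \emph{does} apply and gives $B$ periodic, whence $m(B)=c_\infty$), and then bounds $m(B)\geq\lambda_{9k_0}(B)$. Because the left tail is $9$-periodic, $\lambda_{9k_0}(B)$ has the same backward part $[\overline{2_4,1_2,2_2,1}]$ and forward part $[0;W,1,\ldots]$ with $W=1,2_2,1_2,2_4,1,2_2,1_2,2_2$, and \emph{now} your induction from position $18$ onward (transported by $9k_0$) runs without obstruction to give $\lambda_{9k_0}(B)\geq f$. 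So the missing ingredient is precisely this shift to $\lambda_{9k_0}$; once inserted, your argument and the paper's coincide.
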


\begin{proof} By Proposition \ref{p.Cantors-covering-M-L-piece}, any $m\in M\cap(c_{\infty},C_{\infty})$ has the form:
$$m=\lambda_0(B) = m(B) = [\overline{2_4, 1_2, 2_2, 1}]+[0;1, 2_2, 1_2, 2_4, 1, 2_2, 1_2, 2_2,\dots]$$

We claim there exists a smallest integer $k_0\in\mathbb{N}$ such that $B_{17+9k_0}\neq 2$: otherwise, since $m(B)<C_{\infty}<3.1181201786$, we could recursively apply Lemma \ref{l.F3'} at the positions $n=17+9k$ to deduce that $B=\overline{1, 2_2, 1_2, 2_4}$, and, hence $c_{\infty}=m(\overline{2_4, 1_2, 2_2, 1})=m(B)$, a contradiction.

By definition of $k_0$, 
\begin{eqnarray*}
m(B)&\geq& \lambda_{9k_0}(B)=[\overline{2_4, 1_2, 2_2, 1}]+[0;1, 2_2, 1_2, 2_4, 1, 2_2, 1_2, 2_2,1,\dots] \\ 
&\geq& [\overline{2_4, 1_2, 2_2, 1}]+[0;1, 2_2, 1_2, 2_4, 1, 2_2, 1_2, 2_2, 1_2, 2,\dots]
\end{eqnarray*}

By Lemma \ref{l.F1} (1), we deduce that 
\begin{eqnarray*}
m(B)&\geq& [\overline{2_4, 1_2, 2_2, 1}]+[0;1, 2_2, 1_2, 2_4, 1, 2_2, 1_2, 2_2, 1_2, 2,\dots] \\ 
&\geq& [\overline{2_4, 1_2, 2_2, 1}]+[0;1, 2_2, 1_2, 2_4, 1, 2_2, 1_2, 2_2, 1_2, 2_2,\dots] \\ 
&\geq& [\overline{2_4, 1_2, 2_2, 1}]+[0;1, 2_2, 1_2, 2_4, 1, 2_2, 1_2, 2_2, 1_2, 2_3,1\dots]
\end{eqnarray*}

By Lemma \ref{l.F1} (1), (2), we obtain 
\begin{eqnarray*}
m(B)&\geq& [\overline{2_4, 1_2, 2_2, 1}]+[0;1, 2_2, 1_2, 2_4, 1, 2_2, 1_2, 2_2, 1_2, 2_3,1\dots] \\ 
&\geq& [\overline{2_4, 1_2, 2_2, 1}]+[0;1, 2_2, 1_2, 2_4, 1, 2_2, 1_2, 2_2, 1_2, 2_3,1_3\dots]
\end{eqnarray*}

At this point, we can repeat the recursive argument in the proof of Lemma \ref{l.14} to conclude that 
\begin{eqnarray*}
m(B)&\geq& [\overline{2_4, 1_2, 2_2, 1}]+[0;1, 2_2, 1_2, 2_4, 1, 2_2, 1_2, 2_2, 1_2, 2_3,1_3\dots] \\ 
&\geq& [\overline{2_4, 1_2, 2_2, 1}]+[0;1, 2_2, 1_2, 2_4, 1, 2_2, 1_2, 2_2, 1_2, \overline{2_3,1_3}] = f
\end{eqnarray*}

This completes the proof of the lemma. 
\end{proof}

\appendix

\section{Berstein's interval around $\sigma$}\label{a.Berstein}

In this appendix, we prove that $(c_{\infty}, C_{\infty})$ is the largest interval disjoint from $L$ containing $\sigma$. As it was pointed out in Remark \ref{r.Berstein}, our task is reduced to prove that:

\begin{lemma}
$C_{\infty}\in L$.
\end{lemma}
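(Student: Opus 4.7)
The plan is to construct a bi-infinite sequence $A\in\{1,2\}^{\mathbb{Z}}$ with $\ell(A)=C_{\infty}$. The idea is to place, at a strictly increasing sequence of positions $p_1<p_2<\dots$ going to $+\infty$, longer and longer copies of the extremal local configuration around which Lemma~\ref{l.14} forces $\lambda$-values to approach $C_{\infty}$; elsewhere $A$ is filled with a periodic pattern whose $\lambda$-values lie well below $C_{\infty}$.

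For each integer $a\geq 1$, let $W_a$ be the finite word of length $12a+64$ whose middle $32$ characters equal $1_3\,2_4\,1\,2_2\,1_2\,2_4\,1\,2_2\,1_2\,2_4\,1\,2_2\,1_2\,2_2$ and whose forward and backward pre-periodic extensions, together with the periodic blocks $(2_3\,1_3)^{a+2}$ and $(1_3\,2_3)^{a+1}$, are read off from the two continued fractions on the right-hand side of Lemma~\ref{l.14} with parameter $a$. Choose $p_1<p_2<\dots$ in $\mathbb{N}$ with $p_{k+1}-p_k\geq 2(12(k+1)+64)+100$, and define $A$ by inserting a copy of $W_k$ centered at $p_k$ for each $k\geq 1$ and filling every remaining position with the periodic sequence $\overline{2_3,1_3}$ in the phase that agrees with the periodic padding of the neighboring $W_k$'s.

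For the lower bound $\ell(A)\geq C_{\infty}$, I apply Lemma~\ref{l.14} at $n=p_k$ with $a=k$: the hypotheses $\lambda_j(A)<3.1181201786$ at the prescribed positions are satisfied because, around $p_k$, the sequence $A$ locally matches the extremal structure of Lemma~\ref{l.F3'}, so those $\lambda$-values are automatically under control. The lemma then yields
\begin{align*}
\lambda_{p_k}(A)&\geq [2;1,2_2,1_2,2_4,1,2_2,1_2,2_4,1,2_2,1_2,2_2,1,2_4,1_2,(2_3,1_3)^{k+2},\dots] \\
&\quad +[0;2_3,1_2,2_2,1,2_4,1,(1_3,2_3)^{k+1},\dots],
\end{align*}
whose right-hand side converges to $C_{\infty}$ as $k\to\infty$, so $\limsup_{n\to+\infty}\lambda_n(A)\geq C_{\infty}$.

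For the reverse inequality $\ell(A)\leq C_{\infty}$, I split cases for $n$ large. If $n$ lies in a gap filled by $\overline{2_3,1_3}$ (or close enough to such a gap), then $\lambda_n(A)$ equals a $\lambda$-value of this periodic sequence, bounded by a constant strictly less than $c_{\infty}$. If $n$ lies inside a window $W_k$, then the local completion of $A$ around $n$ agrees, in an arbitrarily large window as $k\to\infty$, with a bi-infinite sequence $E_k$ whose central value $\lambda_0(E_k)$ equals $L_k$, the right-hand side of Lemma~\ref{l.14} with $a=k$. Hence $\lambda_n(A)\leq m(E_k)+o(1)$. The hard part will be to verify that $m(E_k)=L_k$, i.e.\ that the Markov value of $E_k$ is attained at $j=0$: this is the direct analogue of Lemma~\ref{l.gamma-in-M} and follows from the uniqueness-of-extremal-pattern analysis of Lemmas~\ref{l.F1}--\ref{l.F4} combined with Proposition~\ref{p.Cantors-covering-M-L-piece}, since any $j\neq 0$ in $E_k$ has a local environment incompatible with the $32$-character central block, forcing $\lambda_j(E_k)<L_k$. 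Combining the two bounds gives $\ell(A)=C_{\infty}$, and hence $C_{\infty}\in L$.
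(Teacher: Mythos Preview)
Your approach is genuinely different from the paper's and more ambitious: the paper exhibits periodic words $P_a$ with $\ell(\overline{P_a})=\lambda_0(\overline{P_a})\to C_\infty$ and invokes the closedness of $L$, whereas you try to build a single sequence $A$ with $\ell(A)=C_\infty$ directly. In principle this can work, but the proof as written has real gaps.

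The lower bound is overcomplicated and partially circular. You do not need Lemma~\ref{l.14}: since you have constructed $A$ explicitly around $p_k$, the value $\lambda_{p_k}(A)$ can simply be read off and seen to converge to $C_\infty$. Invoking Lemma~\ref{l.14} requires first checking that $\lambda_j(A)<3.1181201786$ at a list of nearby positions, which is precisely the content of the upper bound you have not yet proved.

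The upper bound is where the argument actually lives, and you have not carried it out. Inside each window $W_k$ the pattern $2^*\,1\,2_2\,1_2\,2_4$ reappears not only at $p_k$ but also at $p_k\pm 9$; Lemma~\ref{l.F2} does \emph{not} dispose of those positions. One must compute, as the paper does for $\overline{P_a}$, that $\lambda_{p_k-9}(A)\to 3.11800\ldots$ and $\lambda_{p_k+9}(A)\to 3.11812017817\ldots<C_\infty$. Your appeal to Proposition~\ref{p.Cantors-covering-M-L-piece} to force $m(E_k)=L_k$ is circular: that proposition assumes $m(B)\in(c_\infty,C_\infty)$, i.e.\ $m(B)<C_\infty$, which is exactly what you are trying to establish. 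Nor does the claim ``any $j\neq 0$ has a local environment incompatible with the $32$-character central block'' hold, since $j=\pm 9$ are compatible with it. The paper's periodic construction sidesteps these difficulties: for $\overline{P_a}$ one only needs to check finitely many positions modulo the period, and the explicit computation of $\lambda_{\pm 9}(\overline{P_a})$ is short.
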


\begin{proof}
Our task is to exhibit a sequence $(P_a)_{a\in\mathbb{N}}$ of finite words in $1$ and $2$ such that 
  $$\lim\limits_{a\to\infty} \ell(\overline{P_a})=C_{\infty}$$
  
We claim that   
  $$P_a:=\underbrace{2_3 1_3\dots 2_3 1_3}_{a \textrm{ times }} 1 2_4 1 2_2 1_2 2_3 2^* 1 2_2 1_2 2_4 1 2_2 1_2 2_4 1 2_2 1_2 2_2 1 2_4 1_2 \underbrace{2_3 1_3\dots 2_3 1_3}_{a \textrm{ times }}$$
satisfy $\lim\limits_{a\to\infty} \ell(\overline{P_a})=C_{\infty}$. 
  
Indeed, we start by observing that $\lim\limits_{a\to\infty}\lambda_0(\overline{P_a}) = C_{\infty}$ (with the convention that the zeroth position corresponds to $2^*$). Next, we notice that Lemma \ref{l.F2} (15), (16), (19) imply that $\lambda_j(\overline{P_a})<3.118117$ except possibly when the $j=\pm9$ or $0$ modulo the length $P_a$. Moreover, 
\begin{eqnarray*}
\lim\limits_{a\to\infty}\lambda_{-9}(\overline{P_a}) &=& [2;1,2_2,1_2,2_4,1,2_2,1_2,2_4,1,2_2,1_2,2_4,1,2_2,1_2,2_2,1,2_4,1_2,\overline{2_3,1_3}] + [0;2_3,1,\overline{1_3,2_3}] \\ &=& 3.1180041084\dots
\end{eqnarray*}
and 
\begin{eqnarray*}
\lim\limits_{a\to\infty}\lambda_{9}(\overline{P_a}) &=& [2;1,2_2,1_2,2_4,1,2_2,1_2,2_2,1,2_4,1_2,\overline{2_3,1_3}] + [0;2_3,1_2,2_2,1,2_4,1_2,2_2,1,2_4,1,\overline{1_3,2_3}] \\ &=& 3.11812017817071\dots
\end{eqnarray*}
In particular, $\ell(\overline{P_a}) = m(\overline{P_a}) = \lambda_0(\overline{P_a})$ fo all $a\in\mathbb{N}$ sufficiently large. 

In summary, we showed that $\lim\limits_{a\to\infty}\ell(\overline{P_a}) = \lim\limits_{a\to\infty}\lambda_0(\overline{P_a}) = C_{\infty}$, as desired. 
\end{proof}


\begin{thebibliography}{99}

\bibitem{Be73} A.~A.~Berstein, \emph{The connections between the Markov and Lagrange spectra}, Number-theoretic studies in the Markov spectrum and in the structural theory of set addition, pp. 16--49, 121--125. Kalinin. Gos. Univ., Moscow, 1973.

\bibitem{CF} T.~Cusick and M.~Flahive, \emph{The Markoff and Lagrange spectra}, Mathematical Surveys and Monographs, 30. American Mathematical Society, Providence, RI, 1989. x+97 pp.

\bibitem{Fr68} G.~A.~Freiman, \emph{Non-coincidence of the Markov and Lagrange spectra}, Mat. Zametki 3 (1968), 195--200. English transl., Math. Notes 3 (1968),125--128.

\bibitem{MaMo} C.~Matheus and C.~G.~Moreira, $HD(M\setminus L)>0.353$, Preprint (2017) available at arXiv:1703.04302.

\bibitem{PT} J.~Palis and F.~Takens, \emph{Hyperbolicity and sensitive chaotic dynamics at homoclinic bifurcations}, Fractal dimensions and infinitely many attractors. Cambridge Studies in Advanced Mathematics, 35. Cambridge University Press, Cambridge, 1993. x+234 pp.





\end{thebibliography}
\end{document}